\newcommand{\kom}[1]{}
\renewcommand{\kom}[1]{{\bf [#1]}}
\definecolor{gruen}{cmyk}{1.0,0.2,0.7,0.07}
 \newcommand{\eps}{{\varepsilon}}
 \newcommand{\osc}{\mbox{osc}}
 \def\1{\raisebox{2pt}{\rm{$\chi$}}}
\newcommand{\abs}[1]{\left|#1\right|}
\def\vint_#1{\mathchoice%
          {\mathop{\kern 0.2em\vrule width 0.6em height 0.69678ex depth -0.58065ex
                  \kern -0.8em \intop}\nolimits_{\kern -0.4em#1}}%
          {\mathop{\kern 0.1em\vrule width 0.5em height 0.69678ex depth -0.60387ex
                  \kern -0.6em \intop}\nolimits_{#1}}%
          {\mathop{\kern 0.1em\vrule width 0.5em height 0.69678ex
              depth -0.60387ex
                  \kern -0.6em \intop}\nolimits_{#1}}%
          {\mathop{\kern 0.1em\vrule width 0.5em height 0.69678ex depth -0.60387ex
                  \kern -0.6em \intop}\nolimits_{#1}}}
\def\vintslides_#1{\mathchoice%
          {\mathop{\kern 0.1em\vrule width 0.5em height 0.697ex depth -0.581ex
                  \kern -0.6em \intop}\nolimits_{\kern -0.4em#1}}%
          {\mathop{\kern 0.1em\vrule width 0.3em height 0.697ex depth -0.604ex
                  \kern -0.4em \intop}\nolimits_{#1}}%
          {\mathop{\kern 0.1em\vrule width 0.3em height 0.697ex depth -0.604ex
                  \kern -0.4em \intop}\nolimits_{#1}}%
          {\mathop{\kern 0.1em\vrule width 0.3em height 0.697ex depth -0.604ex
                  \kern -0.4em \intop}\nolimits_{#1}}}
\newcommand{\kint}{\vint}
\newcommand{\intav}{\vint}
\newcommand{\aveint}[2]{\mathchoice%
          {\mathop{\kern 0.2em\vrule width 0.6em height 0.69678ex depth -0.58065ex
                  \kern -0.8em \intop}\nolimits_{\kern -0.45em#1}^{#2}}%
          {\mathop{\kern 0.1em\vrule width 0.5em height 0.69678ex depth -0.60387ex
                  \kern -0.6em \intop}\nolimits_{#1}^{#2}}%
          {\mathop{\kern 0.1em\vrule width 0.5em height 0.69678ex depth -0.60387ex
                  \kern -0.6em \intop}\nolimits_{#1}^{#2}}%
          {\mathop{\kern 0.1em\vrule width 0.5em height 0.69678ex depth -0.60387ex
                  \kern -0.6em \intop}\nolimits_{#1}^{#2}}}
\newtheorem{theorem}{Theorem}[section]
\newtheorem{corollary}[theorem]{Corollary}
\newtheorem{lemma}[theorem]{Lemma}
\newtheorem{propo}[theorem]{Proposition}
\newtheorem{definition}[theorem]{Definition}
\newtheorem{remark}[theorem]{Remark}
 \renewcommand{\div}{\operatorname{div}}
  \newcommand{\R}{\mathbb{R}}
\newcommand{\xintloo}[1]{\int\limits_{#1} \kern-18pt\raise4pt\hbox to7pt {\hrulefill}\ }   \numberwithin{equation}{section}
\newcommand{\G}{{\mathcal G}}
  \newcommand{\Z}{\mathbb{Z}}
 \newcommand{\ud}{\, d}
\newcommand{\I}{\textrm{I}}
\newcommand\vv{\overline{v}}
\newcommand{\B}{\textup{B}}
\newcommand{\USC}{\textup{USC}}
\newcommand{\LSC}{\textup{LSC}}
\begin{document}
 \title[]{Convergence of dynamic programming principles \\ for the $p$-Laplacian}
 
  \author[F. del Teso]{F\'elix del Teso}
 \address{F\'elix del Teso
 \hfill\break\indent
Department of Mathematical Analysis and Applied Mathematics
\hfill\break\indent
Universidad Complutense de Madrid
\hfill\break\indent 28040 Madrid,  Spain
\hfill\break\indent
{\tt fdelteso@ucm.es}}
 
 \author[J. Manfredi]{Juan J. Manfredi}
 \address{Juan J. Manfredi
 \hfill\break\indent
Department of Mathematics
\hfill\break\indent
University of Pittsburgh
\hfill\break\indent Pittsburgh, PA 15260, USA
\hfill\break\indent
{\tt manfredi@pitt.edu}}

 \author[M. Parviainen]{Mikko Parviainen}
  \address{Mikko Parviainen
 \hfill\break\indent
Department of Mathematics and Statistic
\hfill\break\indent
University of Jyv\"{a}skyl\"{a}
\hfill\break\indent 40014 Jyv\"{a}skyl\"{a}, Finland.
\hfill\break\indent
{\tt mikko.j.parviainen@jyu.fi}}
 
  \date{\today}
\keywords{$p$-Laplacian, Dirichlet problem, Dynamic Programming Principle, Discrete approximations, Asymptotic mean value properties, Convergence, Monotone approximations, Viscosity solutions, Generalized viscosity solutions, Equivalent notions of solutions, Numerical methods.}
\subjclass[2010]{
35J92, 35D40, 49L20, 49L25, 35R02, 35B05, 35J62
}
\maketitle

\begin{abstract}
We provide a unified strategy to show that solutions of  dynamic programming principles associated to the $p$-Laplacian converge to the solution of the corresponding  Dirichlet problem. Our approach includes all previously known cases for continuous and discrete dynamic programming principles, provides new results, and gives a convergence proof free of probability arguments.
\end{abstract}
{\singlespacing
\tableofcontents        
}

\section{Introduction}
Consider a bounded Lipschitz domain $\Omega\subset\mathbb{R}^{d}$ and a  Lipschitz  function $g\colon\partial\Omega\mapsto\mathbb{R}$. We are interested in the Dirichlet problem
\begin{equation}\label{dproblem:intro}  
\left\{
\begin{array}{cccl}
-\Delta_{p} u & = & 0& \text{ in } \Omega\\
u &=& g & \text{ on } \partial{\Omega},
\end{array}
\right.
\end{equation}
where $\Delta_{p}u=\div\left(|\nabla u|^{p-2}\nabla u\right)$ is the $p$-Laplace operator. 
Given a small $\eps>0$ consider a thin strip around the boundary $\Gamma_{\eps}=\{ x\in \R^d\setminus\Omega\colon d(x, \partial\Omega)\le \eps\}$ and extend $g$ to $\Gamma_{\eps}$. For a representative example choose $\alpha$ and $\beta$ non-negative such that $\alpha+\beta=1$ and a function $v\colon\Omega\cup\Gamma_{\eps}\mapsto\mathbb{R}$, consider the dynamic programming principle
\begin{equation}\label{pharmonious:intro}  
\left\{
\begin{array}{cccl}
 v(x)& = &  \displaystyle\frac{\alpha}{2}\left(\displaystyle\sup_{B_{\eps}(x)}  v+\inf_{B_{\eps}(x)}  v \right)+ \beta \displaystyle \intav_{B_{\eps}(x)} v(y)\, dy& \text{ in } \Omega\\
v(x) &=& g(x)\hspace{6cm}  & \text{ on } \Gamma_{\eps}.
\end{array}
\right.
\end{equation}
When $p\ge 2$, $\alpha=\frac{p-2}{p+d}$, and $\beta=\frac{2+d}{p+d}$, it turns out that there is always a unique solution to \eqref{pharmonious:intro} that we label $u_{\eps}$ and call $(\eps,p)$-harmonious. 
Manfredi-Parviainen-Rossi proved in \cite{MPR12} that $u_{\eps}$ converges uniformly to $u$ the solution of
\eqref{dproblem:intro} when $\eps\to 0$. This theorem has been extended to the case $1<p<2$ with variable  $p(x)$  (\cite{AHP17}), to the single and double obstacle problems (\cite{LM17,CLM17}), and to the Heisenberg group (\cite{LMR18}). All of these proofs use  probability tools based on the fact that the value function of a tug-of-war game with noise  converges to a $p$-harmonic function \cite{PSSW09, PS08}. The idea is to obtain approximative uniform  continuity close to the boundary by using a barrier argument, and then "copying of strategies" to infer approximative uniform continuity also far away from the boundary.  Then the proof is completed using a version of the Arzel\`a-Ascoli theorem together with stability of viscosity solutions. If the setting is not translation invariant like in the case of $p(x)$, then copying of strategies can be replaced by local regularity arguments \cite{LP18}.
 
The existence and uniqueness of the solution $u_{\eps}$ to \eqref{pharmonious:intro}, as well as the fact that $u_{\eps}$ coincides with the value function of the tug-of-war with noise is proven in \cite{LPS14}. It also contains a modified dynamic programming principle with continuous solutions (Theorem 4.1), which has turned out to be useful in extending the existence and uniqueness to the case $1<p<2$ \cite{Har16}, and also in \cite{AHP17} mentioned above.

Heuristically the link between \eqref{dproblem:intro}  and \eqref{pharmonious:intro}   can be seen by using the classical Taylor expansion
\[
u(y)=u(x)+\nabla u(x)\cdot(y-x)
+\frac12 D^2u(x)(y-x)\cdot (y-x)+O(|y-x|^3)
\]
for smooth $u$ with nonvanishing gradient.
We average over $B_{\eps}(x)$ and obtain
\begin{equation}
\label{exp.lapla} u(x) -  \intav_{B_\eps(x)} u\ud y = -
  \frac{\eps^2}{2(n+2)}\Delta u (x)  + O(\eps^3),
\end{equation}
where
\[
\kint_{ B_{\eps}(x)} u \ud y=\frac{1}{\abs{B_{\eps}(x)}}\int_{ B_{\eps}(x)} u \ud y.
\]

The gradient direction is almost maximizing, and the opposite direction is almost minimizing. Thus, summing up the two Taylor expansions with these choices roughly gives us
\begin{equation}\label{exp.infty}
\begin{split}
u(x) &- \frac{1}{2} \left\{ \sup_{ B_{\eps}(x)} u
+ \inf_{ B_{\eps}(x)} u \right\} \\&\approx u(x)-
\frac{1}{2} \left\{ u \left(x+\eps\frac{\nabla u(x)}{\abs{\nabla
u(x)}} \right)+ u\left(x-\eps\frac{\nabla u(x)}{\abs{\nabla u(x)}}\right)\right\}\\
&= - \frac{\eps^2}{2} \Delta^N_\infty u (x) + O(\eps^3)
\end{split}
\end{equation}
since
$$
D^2u(x)\frac{\nabla u(x)}{\abs{\nabla u(x)}}\cdot \frac{\nabla u(x)}{\abs{\nabla u(x)}}=\left\langle D^2u(x)\frac{\nabla u(x)}{\abs{\nabla u(x)}}, \frac{\nabla u(x)}{\abs{\nabla u(x)}} \right\rangle =:\Delta^N_\infty u (x).
$$
Next we multiply \eqref{exp.lapla} and \eqref{exp.infty} by
suitable constants $\alpha, \beta\ge 0$, $\alpha+\beta=1$, and add
up the formulas so that we have the operator
$\Delta u+(p-2) \Delta^N_\infty u =\abs{\nabla u}^{2-p}\Delta_{p} u$
 on the right hand side in the resulting expansion. Thus for example if $u$ satisfies \eqref{dproblem:intro}, then $\abs{\nabla u}^{2-p}\Delta_{p} u=0$ and, dropping the error term in the expansion, we end up with  \eqref{pharmonious:intro}. 

\par
One of the main contributions of this paper is to present a unified and purely analytical proof of the convergence results for general dynamic programming principles associated to the Dirichlet problem \eqref{dproblem:intro}.
We do it in two steps. First, we consider the case of smooth domains ($\Omega$ is of class $C^{2}$) where we show that for the $p$-Laplacian with $p\in(1,\infty]$,  the concept of generalized viscosity solution of Barles and Ishii (\cite{BP87,BP88,I89}),  that takes into account the boundary condition, is equivalent to the classical notion of viscosity solution. This equivalence allows us to follow the scheme of Barles-Souganidis \cite{BS91} to show convergence. In the language of \cite{BS91} we show the that $p$-Laplacian satisfies the \lq\lq strong uniqueness property\rq\rq. 

For Lipschitz domains, we adapt a boundary regularity argument from  \cite{MPR12} showing the existence of good barriers near boundary points. These barriers are built on certain ring domains using the fundamental solution of the $p$-Laplacian with pole at the center of the ring, which are $C^{2}$-domains. 

\par
As mentioned above our method is quite general requiring only certain properties of the average operator that appears in the dynamic programming principle (see Section \S \ref{sec:gen}). As a  consequence of this generality we cover also discrete dynamic programming principles that when restricted to a lattice can be seen as convergent numerical schemes  for the Dirichlet problem \eqref{dproblem:intro}.

\medskip
\textbf{Organization of the paper:}\nopagebreak
\par
In Section \S \ref{sec:gen} we present the general framework with precise assumptions and definition and state the main results. Section \S \ref{sec:SUPC2domains} is devoted to the study of generalized viscosity solutions and the strong uniqueness property for the $p$-Laplacian. In Section \S \ref{sec:conv} we give an interpretation of dynamic programming principles as schemes and use this framework to prove the desired convergence. In Section \S \ref{sec:contDPP}, we extend our results to a class of dynamic programming principles with continuous solutions. Finally, in Section \S \ref{sec:examples} we present a series of examples included in our general theory.

\medskip
\textbf{Notation}
\par
$\bullet$ $\USC(\mathcal{S})$ $\equiv$ Upper semi-continuous function on the set $\mathcal{S}$

$\bullet$ $\LSC(\mathcal{S})$ $\equiv$ Upper semi-continuous function on the set $\mathcal{S}$

$\bullet$ $\B(\mathcal{S})$ $\equiv$ Bounded functions on the set $\mathcal{S}$.

$\bullet$ $C(\mathcal{S})$ $\equiv$ Continuous functions on the set $\mathcal{S}$.

$\bullet$ $C_b^k(\mathcal{S})$ $\equiv$ Functions in $\mathcal{S}$ with continuous bounded derivatives up to order $k$.

$\bullet$ $f_\eps=o(\eps^k)$ for some $k\geq0$ when  $|f_\eps|/\eps^{k} \to 0$ as $\eps\to0$ 

$\bullet$ $f_\eps=O(\eps^k)$ for some $k\geq0$ when  $|f_\eps|\leq C \eps^k$ for some constant independent of $\eps$.

\section{General framework and main results}\label{sec:gen}

\subsection{Dirichlet problem for the $p$-Laplacian}
In this section we revisit the concept of viscosity solution for the Dirichlet problem associated to the $p$-Laplacian. 

First, consider the \emph{$p$-Laplacian} operator defined for $p\in(1,\infty)$ and a smooth function $\phi$ as
\[
\Delta_p\phi(x):=\div\left(|\nabla \phi(x)|^{p-2}\nabla\phi(x)\right).
\]
It is also interesting to consider the so-called \emph{normalized $p$-Laplacian} defined whenever $\nabla\phi(x)\not=0$ as 
\[
\Delta_p^{N}\phi(x):=\frac{1}{p}\frac{1}{|\nabla \phi(x)|^{p-2}}\div\left(|\nabla \phi(x)|^{p-2}\nabla\phi(x)\right)=\frac{1}{p}\frac{1}{|\nabla \phi(x)|^{p-2}}\Delta_p \phi(x).
\]
The limit cases $p=1$ and $p=\infty$ are given by
\[
\Delta_1\phi(x)=\div\left(\frac{\nabla \phi(x)}{|\nabla \phi(x)|}\right), \qquad \Delta_1^N\phi(x)=|\nabla\phi(x)|\Delta_1\phi(x),
\]
and 
\[
\Delta_\infty\phi(x)=\langle D^2 \phi(x)\nabla\phi(x), \nabla \phi(x) \rangle, \qquad \Delta_\infty^N\phi(x)=\frac{1}{|\nabla \phi(x)|^2}\Delta_\infty\phi(x).
\]
\begin{remark}\label{rem:interpPlap}
We recall the following relations between the above defined operators valid for any $p\in(1,\infty)$:
\begin{equation}\label{eq:interp1}
\Delta_p^N\phi(x)= \frac{1}{p}\Delta_1^N\phi(x)+\frac{p-1}{p} \Delta_\infty^N \phi(x)
\end{equation}
and 
\begin{equation}\label{eq:interp2}
\Delta_p^N\phi(x)= \frac{1}{p}\Delta\phi(x)+\frac{p-2}{p} \Delta_\infty^N \phi(x)
\end{equation}
\end{remark}

Let $p\in(1,\infty]$ and $\Omega \subset \R^d$ be a Lipschitz bounded domain. Consider the Dirichlet problem given by 
\begin{equation}\label{dproblem} 
\left\{
\begin{array}{cccl}
-\Delta_{p} u & = & 0& \text{ in } \Omega\\
u &=& g & \text{ on } \partial{\Omega},
\end{array}
\right.
\end{equation}
for some $g\in C(\partial \Omega)$. Recall  the  classical concept of viscosity solutions for the Dirichlet problem \eqref{dproblem} (see eg. \cite{JLM01}).

\begin{definition}\label{def:ViscSub} Let $p\in (1,\infty]$. A function $u\in \USC(\overline{\Omega})$ is a \textbf{viscosity 
$p$-subsolution} of \eqref{dproblem} if whenever $x_{0}\in\overline{\Omega}$ and $\phi\in C^{2}(\overline{\Omega})$ satisfy
\begin{equation*} \left\{
\begin{array}{ccl}
\phi(x_{0})  & = & u(x_{0}),\\
\phi(x)  &>& u(x) \quad \text{for}\quad x\not=x_{0},\\
\nabla \phi(x_0)&\not=&0,
\end{array}
\right.
\end{equation*}
we have
\begin{align}
\label{eq:intC1} -\Delta_p\phi(x_0)&\leq0 \quad \text{ if } \quad  x_0\in \Omega,\\
\label{eq:BC1} u(x_0)-g(x_0)&\leq0 \quad \text{ if } \quad  x_0\in\partial\Omega.
\end{align}
\end{definition}

\begin{definition}\label{def:ViscSuper} Let $p\in (1,\infty]$. A function $u\in \LSC(\overline{\Omega})$ is a \textbf{viscosity 
$p$-supersolution} of \eqref{dproblem} if whenever $x_{0}\in\overline{\Omega}$ and $\phi\in C^{2}(\overline{\Omega})$ satisfy
\begin{equation*} \left\{
\begin{array}{ccl}
\phi(x_{0})  & = & u(x_{0}),\\
\phi(x)  &< & u(x) \quad \text{for}\quad x\not=x_{0},\\
\nabla \phi(x_0)&\not=&0
\end{array}
\right.
\end{equation*}
we have
\begin{align}
\label{eq:intC2} -\Delta_p\phi(x_0)&\geq0 \quad \text{ if } \quad  x_0\in \Omega,\\
\label{eq:BC2} u(x_0)-g(x_0)&\geq0 \quad \text{ if } \quad  x_0\in\partial\Omega.
\end{align}
\end{definition}

\begin{definition}\label{def:ViscSol}
Let $p\in (1,\infty]$. A function $u\in C(\overline{\Omega})$ is a \textbf{viscosity 
$p$-solution} of  \eqref{dproblem} if it is both viscosity $p$-subsolution and $p$-supersolution.

\end{definition}
Observe that we only use test functions with non-vanishing gradient.
Existence of weak solutions of the Dirichlet problem \eqref{dproblem} when $p\in(1,\infty)$, $\Omega$ is a bounded Lipschitz domain,  and $g\in C(\partial\Omega)$ is classical, see \cite{HKM06}. The fact that weak solutions are viscosity solutions, as well as many other properties, is established in \cite{JLM01}. Uniqueness is a folklore result that can be found in appendix 2 of
 \cite{LM17}.
 
\subsection{Mean value properties and dynamic programming principles}
Given a bounded domain $\Omega$ we define the following sets:

\emph{Outer boundary strip:} $\Gamma_\eps:=\{x\in\R^d\setminus\Omega \colon d(x,\partial \Omega)\leq\eps\}$ and $O:=\Gamma_1$.

\emph{Inned boundary strips:} $I_\eps:=\{x\in\Omega \colon d(x,\partial \Omega)\leq \eps\}$ and $I:=I_1$.

\emph{Extended domain:} $\Omega_E:= \Omega\cup O$ (note that $\Omega_E$  is a closed set).

Two basic elements in the construction of a dynamic programming principle are the concepts of averages and mean value properties.
\begin{definition}
We say that an operator $A: \B(\Omega_E)\to \R$ is an \textbf{average} if the following assumptions hold:
\begin{enumerate}
\item (Stable) $\displaystyle\inf_{y\in\Omega_E} \phi(y) \leq A[\phi](x) \leq \sup_{y\in\Omega_E}  \phi(y)$ for all $x\in \Omega$
\item (Monotone) If $\phi\leq \psi$ in $\Omega_E$ then $A[\phi]\leq A[\psi]$ in $\Omega_E$.
\medskip
\item (Affine invariance) $A[\lambda \phi+\xi]=\lambda A[\phi]+\xi$ for every $\xi\in \R$ and $\lambda>0$.
\end{enumerate}
\end{definition}
We refer to Section \ref{sec:examples} for concrete examples of averages that are closely connected to the the Laplacian,  $\infty$-Laplacian, and the $p$-Laplacian. Among others, they can take integral form like in Section \ref{sec:MVPp2}, $\sup$/$\inf$ forms as in Section \ref{sec:MVPinfty}, discrete versions of them like in Section \ref{sec:MVPdiscrete}, as well as finite difference forms as in Section \ref{sec:MVPFD}. 

\begin{definition}\label{def:MVP}
We say that family of averages $\{A_\eps\}_{\eps>0}$ is an  \textbf{(asymptotic) mean value property} for the $p$-Laplacian if for every $\phi\in C^\infty_b(\Omega_E)$ such that $\nabla \phi \not=0$ we have
\begin{equation}\label{eq:MVP}
\phi(x)= A_\eps[\phi](x)+c_{p,d}\eps^2\left(-\Delta_p^N \phi(x)\right)+ o(\eps^2).
\end{equation}
for some constant $c_{p,d}>0$ and where the constant in $o(\eps^2)$ can be taken uniformly for all $x\in \overline{\Omega}$.
\end{definition}

 For examples of mean value properties, we refer the reader again to Section \ref{sec:examples}. 

\begin{remark} An alternative to \eqref{eq:MVP} is given by 
\[
\left\|\frac{\phi-A_\eps[\phi]}{c_{p,d} \eps^2}-\left(-\Delta_p^N \phi\right)\right\|_{L^\infty(\overline{\Omega})}= o(1)
\textrm{ as } \eps\to0.
\]
\end{remark}
\medskip 
Given $g\in C(\partial \Omega)$ let $G$ to be a continuous extension of $g$ to $\Omega_E$. Associated to a mean value property $A_\eps$ we have a dynamic programming principle given by
\begin{equation}\label{eq:expDPP1}
\left\{
\begin{array}{cccl}
u_\eps(x) & = & A_\eps [u_\eps] (x) & \text{ in } \Omega,\\
u_\eps(x)&=& G(x) & \text{ on } {O}.
\end{array}
\right.
\end{equation}

Our first general result (Theorem \ref{thm:main}) concerning $C^2$-domains, will require the following assumption 
\begin{equation}\label{as:stab}
\begin{split}
&\textup{For all $ \eps>0$ there exists $ u_\eps \in B(\Omega_E)$, a  solution of \eqref{eq:expDPP1}
with a}\\ &\textup{bound on $\|u_\eps\|_{L^\infty(\Omega_E)}$ uniform in $\eps$.}
\end{split}
\end{equation}
\begin{remark}
Note that the uniform bound in  the assumption \eqref{as:stab} follows trivially if the dynamic programming principle \eqref{eq:expDPP1} satisfies  the standard maximum principle given by
\[
\|u_\eps\|_{L^\infty(\Omega_E)}\leq \|G\|_{L^\infty(O)}\qquad \textrm{for all } \eps>0.
\]
\end{remark}

Our second general result (Theorem \ref{thm:main2}) for Lipschitz domains requires slightly more restrictive (but still very natural) assumptions on \eqref{eq:expDPP1}:
\begin{equation}\label{as:stab2}
\begin{split}
&\textup{For all $ \eps>0$ there exists $ u_\eps \in B(\Omega_E)$, a  solution of \eqref{eq:expDPP1},  and }\\
&\inf_{O} G \leq u_\eps(x) \leq \sup_{O}{G} \quad \textup{for all}\quad x\in \Omega.
\end{split}
\end{equation}
\begin{equation}\label{as:comppr}
\begin{split}
&\textup{Let $u^1_\eps$ and $u^2_\eps$ be a subsolution and a supersolution of \eqref{eq:expDPP1} with boundary}\\ &\textup{data $G_1$ and $G_2$ respectively. If $G^1\leq G^2$ on $O$ then $u_\eps^1\leq u_\eps^2$ in $\Omega_E$.} 
\end{split}
\end{equation}

\begin{remark}
\begin{enumerate}[(a)]
\item   the assumption \eqref{as:comppr} is  the  comparison principle for \eqref{eq:expDPP1}. 
\medskip
\item $u_\eps$ being a subsolution (resp. supersolution) of the dynamic programming principle \eqref{eq:expDPP1} means that $u_\eps\leq A_\eps [u_\eps]$ in $\Omega$ and $u_\eps\leq G$ in $O$ (resp. $u_\eps\geq A_\eps [u_\eps]$ in $\Omega$ and $u_\eps\geq G$ in $O$).
\item \eqref{as:stab}, \eqref{as:stab2} and \eqref{as:comppr} are the natural assumptions and are satisfied by the dynamic programming principles in the literature (see Section \S \ref{sec:examples}).  For a general presentation on this subject we refer to  \cite{LPS14, LA15}.

\end{enumerate}
\end{remark}

\subsection{Main results}
Here is our  general convergence result regarding $C^2$-domains.
\begin{theorem}\label{thm:main}
Assume $p\in(1,\infty]$, $\Omega\subset \R^d$ be a bounded $C^2$-domain and $g\in C(\partial\Omega)$. Let $\{A_\eps\}_{\eps>0}$ be a mean value property for the $p$-Laplacian.  Let also $\{u_\eps\}_{\eps>0}$ be a sequence of solutions of the corresponding dynamic programming principle \eqref{eq:expDPP1}  satisfying  the assumption \eqref{as:stab}. Then we have that 
\[
\textup{$u_\eps\to v$  uniformly in $\overline{\Omega}$ as $\eps\to0$,  where $v$ is the unique viscosity solution of \eqref{dproblem}.}
\]
\end{theorem}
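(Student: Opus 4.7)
The plan is to follow the Barles--Souganidis scheme \cite{BS91}, which is precisely the strategy announced by the authors. The three ingredients of that scheme are monotonicity, stability, and consistency of the approximation, plus a strong uniqueness property for the limit equation. Monotonicity is built into the definition of an average, stability follows from assumption \eqref{as:stab}, and the strong uniqueness property is what the authors develop in Section~\S\ref{sec:SUPC2domains}. My job is therefore to supply the consistency step and to combine everything correctly.

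I would start by defining the half-relaxed limits
\begin{equation*}
\overline{u}(x):=\limsup_{\eps\to 0,\, y\to x,\, y\in\Omega_E} u_\eps(y),\qquad
\underline{u}(x):=\liminf_{\eps\to 0,\, y\to x,\, y\in\Omega_E} u_\eps(y),
\end{equation*}
defined on $\overline{\Omega}$. These are well-defined, bounded upper/lower semicontinuous functions by \eqref{as:stab}, and $\underline{u}\le\overline{u}$ by construction. The core step is to prove that $\overline{u}$ is a generalized viscosity $p$-subsolution and $\underline{u}$ a generalized viscosity $p$-supersolution of \eqref{dproblem} in the Barles--Perthame--Ishii sense (so the boundary condition is encoded by the relaxed inequality at $\partial\Omega$).

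For consistency at an interior point, I would take $x_0\in\Omega$ and $\phi\in C^2(\overline{\Omega})$ with $\nabla\phi(x_0)\neq 0$ touching $\overline{u}$ strictly from above at $x_0$. By a standard argument there exist $x_\eps\to x_0$ such that $u_\eps-\phi$ attains a local maximum at $x_\eps$ and $u_\eps(x_\eps)\to\overline{u}(x_0)$. Up to a constant we may assume $\phi(x_\eps)=u_\eps(x_\eps)$ and $\phi\ge u_\eps$ on $\Omega_E$ (using extension arguments if needed to get an inequality on all of $\Omega_E$, by replacing $\phi$ with $\phi+\|\phi-u_\eps\|_{L^\infty(\Omega_E)}$ outside a neighborhood in a monotone way). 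Monotonicity and affine invariance of $A_\eps$ together with the DPP give
\begin{equation*}
A_\eps[\phi](x_\eps)\ge A_\eps[u_\eps](x_\eps)=u_\eps(x_\eps)=\phi(x_\eps),
\end{equation*}
and plugging this into the mean value property \eqref{eq:MVP} yields
\begin{equation*}
0\ge \phi(x_\eps)-A_\eps[\phi](x_\eps)=c_{p,d}\eps^2\bigl(-\Delta_p^N\phi(x_\eps)\bigr)+o(\eps^2).
\end{equation*}
Dividing by $c_{p,d}\eps^2$ and letting $\eps\to 0$ gives $-\Delta_p^N\phi(x_0)\le 0$, hence $-\Delta_p\phi(x_0)\le 0$. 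A symmetric argument handles $\underline{u}$ at interior points. At a boundary point $x_0\in\partial\Omega$ one repeats the same localisation; the test inequality either produces $-\Delta_p\phi(x_0)\le 0$ or, if the argmax escapes to the boundary, $\overline{u}(x_0)\le g(x_0)$, which is exactly the generalized boundary inequality of Definitions~\ref{def:ViscSub}--\ref{def:ViscSuper}.

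Once consistency is established, the strong uniqueness property from Section~\S\ref{sec:SUPC2domains} promotes $\overline{u}$ and $\underline{u}$ to a classical viscosity subsolution and supersolution of \eqref{dproblem}, and a standard comparison principle for the $p$-Laplacian Dirichlet problem then forces $\overline{u}\le\underline{u}$ on $\overline{\Omega}$. Combined with $\underline{u}\le\overline{u}$, this gives $\overline{u}=\underline{u}=v$, the unique viscosity solution. The equality of the half-relaxed limits with a continuous function, together with the uniform bound \eqref{as:stab}, upgrades pointwise convergence to uniform convergence on $\overline{\Omega}$ by a routine compactness argument (a Dini-type consequence of upper/lower semicontinuity).

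The main obstacle will be the boundary contribution in the consistency step: because $\Omega_E$ is the natural domain of definition of $A_\eps$ and the test function $\phi$ is only given on $\overline{\Omega}$, one must carefully extend $\phi$ to $\Omega_E$ while preserving the touching property, so that the inequality $A_\eps[\phi]\ge A_\eps[u_\eps]$ really follows from pointwise dominance. A secondary technical point is handling test functions with vanishing gradient, which is why the formulation restricts to $\nabla\phi(x_0)\neq 0$ and why the strong uniqueness result of Section~\S\ref{sec:SUPC2domains} is needed. Everything else is standard Barles--Souganidis machinery.
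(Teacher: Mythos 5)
Your proposal is correct and follows essentially the same Barles--Souganidis route as the paper: half-relaxed limits, monotonicity from the average axioms, consistency from the mean-value expansion, and then the strong uniqueness property of Section~\ref{sec:SUPC2domains} to force $\overline{u}=\underline{u}$. The only presentational difference is that the paper packages the boundary behaviour into a single scheme operator $S$ (equation~\eqref{eq:DPPscheme}) and proves a consistency lemma (Lemma~\ref{lem:cons}) whose half-relaxed limits already carry the $\max$/$\min$ of the interior and boundary terms, which makes the case ``argmax escapes to the boundary'' automatic rather than something to be argued separately; also, by taking test functions $\phi\in C^\infty_b(\Omega_E)$ from the outset, the paper sidesteps the extension issue you flag at the end.
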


Once convergence for smooth domains is established, we will be able to generalize it to general  Lipschitz domains  using barrier arguments by requiring a little bit more of our the dynamic programming principle \eqref{eq:expDPP1}. We have the following result.
\begin{theorem}\label{thm:main2}
Let $p\in(1,\infty]$, $\Omega\subset \R^d$ be a bounded Lipschitz domain and $g\in C(\partial\Omega)$. Let $\{A_\eps\}_{\eps>0}$ be a mean value property for the $p$-Laplacian.  Let also $\{u_\eps\}_{\eps>0}$ be a sequence of solutions of the corresponding \eqref{eq:expDPP1}  satisfying  the assumptions \eqref{as:stab2} and \eqref{as:comppr}. Then we have that 
\[
\textup{$u_\eps\to v$  uniformly in $\overline{\Omega}$ as $\eps\to0$,  where $v$ is the unique viscosity solution of \eqref{dproblem}.}
\]
\end{theorem}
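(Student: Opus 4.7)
The strategy is to reduce Theorem~\ref{thm:main2} to an explicit barrier argument at the boundary plus a half-relaxed limits passage in the interior, bypassing the $C^2$ regularity that was essential for the Barles--Ishii boundary equivalence in the proof of Theorem~\ref{thm:main}. Under this plan, the role previously played by $C^2$-smoothness of $\partial\Omega$ is taken over by smooth radial sub- and supersolutions of the $p$-Laplace equation, constructed in a neighborhood of $\overline\Omega$ from fundamental solutions on annular domains (which are themselves $C^2$). The interior argument from Theorem~\ref{thm:main} is insensitive to the regularity of $\partial\Omega$ and can be reused without modification.

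\textbf{Step 1: Barriers at the boundary.} Fix $x_0\in\partial\Omega$ and $\delta>0$. The Lipschitz structure of $\Omega$ provides an exterior corkscrew ball $B(z,\rho)\subset\R^d\setminus\overline\Omega$ with $|x_0-z|\le C\rho$, where $C$ depends only on the Lipschitz character. For an appropriately chosen exponent $\alpha$ (logarithmic when $p=d$, and replaced by the cone $|x-z|$ when $p=\infty$), the radial function
\begin{equation*}
\psi^+(x)=g(x_0)+\delta+M\bigl(|x-z|^\alpha-|x_0-z|^\alpha\bigr)
\end{equation*}
is $C^2$ on a neighborhood of $\overline\Omega$, has $\nabla\psi^+\neq 0$, and satisfies $-\Delta_p^N\psi^+\ge\gamma>0$ there. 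By continuity of $g$ and by taking $M$ large (exploiting that $|x-z|^\alpha-|x_0-z|^\alpha$ is bounded away from zero on $O$ outside a small neighborhood of $x_0$), one arranges $\psi^+\ge G$ on $O$. The mean value property \eqref{eq:MVP} then gives
\begin{equation*}
\psi^+(x)-A_\eps[\psi^+](x)=c_{p,d}\,\eps^2\bigl(-\Delta_p^N\psi^+(x)\bigr)+o(\eps^2)\ge 0
\end{equation*}
for all $\eps$ small, so $\psi^+$ is a supersolution of \eqref{eq:expDPP1}. The comparison principle \eqref{as:comppr} yields $u_\eps\le\psi^+$ on $\Omega_E$, and a symmetric subsolution $\psi^-$ produces $|u_\eps(x)-g(x_0)|\le\delta+\omega_{x_0}(|x-x_0|)$ for a modulus $\omega_{x_0}$ independent of $\eps$.

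\textbf{Step 2: Passage to the limit.} With equicontinuity up to $\partial\Omega$ uniform in $\eps$ in hand, consider the half-relaxed limits
\begin{equation*}
\overline u(x)=\limsup_{\eps\to 0,\,y\to x}u_\eps(y),\qquad\underline u(x)=\liminf_{\eps\to 0,\,y\to x}u_\eps(y),
\end{equation*}
which are finite on $\overline\Omega$ by \eqref{as:stab2}. The barrier estimate forces $\overline u=\underline u=g$ on $\partial\Omega$. The interior step from the proof of Theorem~\ref{thm:main} --- test against a $C^2$ function with non-vanishing gradient and invoke \eqref{eq:MVP} --- shows that $\overline u$ is a viscosity subsolution and $\underline u$ a viscosity supersolution of $-\Delta_p u=0$ in $\Omega$. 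Since the classical comparison principle for the Dirichlet problem \eqref{dproblem} holds on bounded Lipschitz domains with continuous boundary data, we conclude $\overline u\le v\le\underline u$, hence $\overline u=\underline u=v$, which is equivalent to $u_\eps\to v$ uniformly on $\overline\Omega$.

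\textbf{Main obstacle.} The technically delicate part is verifying the strict sign $-\Delta_p^N\psi^+\ge\gamma>0$ uniformly across the full range $p\in(1,\infty]$: one must separately handle the singular regime $1<p<2$, the borderline exponent $p=d$ where the natural radial ansatz is logarithmic, and the case $p=\infty$, which requires a cone-type perturbation since the power ansatz degenerates as $p\to\infty$. A second bookkeeping issue is ensuring $\psi^\pm$ lie above/below $G$ globally on $O$ (not merely near $x_0$) so that the comparison principle \eqref{as:comppr} applies on $\Omega_E$; this is secured by making the amplitude $M$ large relative to $\|G\|_{L^\infty(O)}$. Once the classical sign is in place, the conversion to a DPP sub/supersolution via \eqref{eq:MVP} is automatic, and the remainder of the argument proceeds uniformly in $p$.
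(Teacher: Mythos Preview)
Your overall strategy---construct a barrier at each boundary point, feed it through the mean value property to obtain a DPP supersolution, then run the interior half-relaxed limits argument---is a legitimate alternative to what the paper does. The paper's route is structurally different: it builds \emph{exact} $p$-harmonic radial barriers $U_k$ on a nested family of shrinking annuli, invokes Theorem~\ref{thm:main} on each annulus (a $C^2$ domain) to produce DPP solutions $U_k^\eps$ uniformly close to $U_k$, and compares $u_\eps$ with $U_k^\eps$ via \eqref{as:comppr} on the \emph{localized} set $B_{\delta_k/2}(z_k)\cap\Omega$; each pass improves the bound by a fixed factor $\theta<1$, and iterating yields the boundary estimate. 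The paper's approach thus uses the $C^2$ result as a subroutine and never needs a strict supersolution or global domination on $O$; your approach is more self-contained but puts all the weight on the barrier construction.

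That construction, as written, has a genuine gap. You apply \eqref{as:comppr} on all of $\Omega_E$, so you need $\psi^+\ge G$ on the \emph{entire} outer strip $O$. But the corkscrew center $z$ lies in $O$, and your barrier $\psi^+(x)=g(x_0)+\delta+M\bigl(|x-z|^\alpha-|x_0-z|^\alpha\bigr)$ breaks down there: if $\alpha>0$ then $\psi^+(z)=g(x_0)+\delta-M|x_0-z|^\alpha\to-\infty$ as $M\to\infty$, so enlarging $M$ to dominate far from $x_0$ destroys domination near $z$; if $\alpha<0$ (which is forced for $1<p\le d$ since no positive power of $r$ is a strict $p$-supersolution in that range) then $\psi^+$ blows up at $z\in O$, hence $\psi^+\notin C_b^\infty(\Omega_E)$ and \eqref{eq:MVP} as stated does not apply, while at large $|x-z|$ the term $M|x-z|^\alpha\to 0$ and domination again fails for large $M$. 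No single pair $(\alpha,M)$ resolves all three constraints simultaneously. The repair is to localize the comparison to a subdomain containing $x_0$ and use the a priori bound from \eqref{as:stab2} on the portion of its $\eps$-boundary lying inside $\Omega$---exactly the device the paper employs---but this requires a subdomain version of \eqref{as:comppr} and more careful bookkeeping than the ``make $M$ large'' argument you sketch.
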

The proofs of Theorem \ref{thm:main} and Theorem \ref{thm:main2} will require several steps that we will  present in sections
\S \ref{sec:SUPC2domains} and \S \ref{sec:conv}. 

We  emphasize that the results of Theorem \ref{thm:main} and Theorem \ref{thm:main2} apply to more general dynamic programming principles than \eqref{eq:expDPP1}. The motivation of studying such generalisations is that, unlike \eqref{eq:expDPP1}, they are known to produce continuous solutions (cf. \cite{Har16}). For the sake of clarity,   we delay the statements of these results to section \S \ref{sec:contDPP} (see Theorem \ref{thm:main3} and Theorem \ref{thm:main4}).

\section{Strong uniqueness property for the Dirichlet problem in $C^2$-domains}\label{sec:SUPC2domains}
Our main strategy to prove convergence of solutions of  the dynamic programming principle \eqref{eq:expDPP1} to the solution of the Dirichlet problem \eqref{dproblem} is suggested by an argument introduced in \cite{BS91}, where the so-called \textit{strong uniqueness property} plays a crucial role.

We introduce a generalized version of viscosity solutions that takes into account the boundary condition in a weaker sense as done in \cite{BP87, BP88} and \cite{I89}. As we will see later, these solutions naturally appear as  limits of solutions of approximation problems akin to  dynamic programming principles. 

\begin{definition}\label{def:genVS}
Under the same hypothesis as in Definition \ref{def:ViscSub} (resp. Definition \ref{def:ViscSuper}), we say that $u$ is a  \textbf{generalized viscosity $p$-subsolution} (resp. \textbf{$p$-supersolution})  of the Dirichlet problem \eqref{dproblem} if the interior condition
\eqref{eq:intC1} (resp. \eqref{eq:intC2}) holds  and the condition at the boundary \eqref{eq:BC1} (resp. \eqref{eq:BC2}) is replaced by
\begin{eqnarray}
 \min\{-\Delta_p\phi(x_0), u(x_0)-g(x_0)\}  & \leq & 0,\quad  x_{0} \in\partial\Omega  \\  
(\textup{resp.  }    \max\{-\Delta_p\phi(x_0), u(x_0)-g(x_0)\} &\geq & 0, \quad  x_{0} \in\partial\Omega  ).
 \end{eqnarray} 

\end{definition}

The strong uniqueness property states the following:

\begin{propo}\label{prop:SUP}(Strong Uniqueness Property) Assume $p\in(1,\infty]$, $\Omega\subset \R^d$ be a $C^2$-domain, and $g\in C(\partial \Omega)$. 
Let $u$ and $v$ be a generalized viscosity $p$-subsolution and a $p$-supersolution respectively. Then $u\leq v$ in $\overline{\Omega}$.
\end{propo}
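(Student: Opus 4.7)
The plan is to reduce the Strong Uniqueness Property to the folklore classical comparison principle for viscosity solutions of the $p$-Laplacian (cf.~\cite{JLM01,LM17}), by showing that on a $C^2$-domain the generalized boundary condition in Definition~\ref{def:genVS} automatically implies the classical one. More precisely, I would establish \emph{(a)} every generalized viscosity $p$-subsolution $u$ of \eqref{dproblem} satisfies $u\le g$ pointwise on $\partial\Omega$, and symmetrically \emph{(b)} every generalized viscosity $p$-supersolution $v$ satisfies $v\ge g$ on $\partial\Omega$. Granted \emph{(a)} and \emph{(b)}, $u$ and $v$ are classical viscosity sub- and supersolutions in the sense of Definitions~\ref{def:ViscSub}--\ref{def:ViscSuper}, and the conclusion $u\le v$ on $\overline{\Omega}$ follows from the classical comparison principle.

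To prove \emph{(a)} I would argue by contradiction. Suppose $x_0\in\partial\Omega$ satisfies $u(x_0)>g(x_0)+2\delta$ for some $\delta>0$, and aim to exhibit an admissible test function $\phi$ touching $u$ from above at some point $x_\ast$ in a small neighbourhood of $x_0$ with $\nabla\phi(x_\ast)\ne 0$, $-\Delta_p\phi(x_\ast)>0$, and $u(x_\ast)-g(x_\ast)>\delta$; this simultaneously contradicts the interior condition \eqref{eq:intC1} and the generalized boundary condition. The $C^2$-regularity of $\partial\Omega$ provides an exterior ball $B_r(y_0)$ at $x_0$ with $y_0=x_0+r\nu(x_0)$ and $\overline{B_r(y_0)}\cap\overline\Omega=\{x_0\}$, and the natural radial barrier is
\[
\Psi(x):=r^{-\gamma}-|x-y_0|^{-\gamma},\qquad \gamma>\max\bigl\{0,\,(d-p)/(p-1)\bigr\},
\]
with $\gamma>0$ arbitrary when $p=\infty$. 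A direct radial computation yields $-\Delta_p\Psi(x)>0$ on $\{x\ne y_0\}$, explicitly $-\Delta_p\Psi=\gamma^{p-1}\bigl[(p-1)(\gamma+1)-(d-1)\bigr]|x-y_0|^{-(\gamma+1)(p-1)-1}$ for $p<\infty$ and $-\Delta_\infty\Psi=\gamma^3(\gamma+1)|x-y_0|^{-3\gamma-4}$, together with $\Psi\ge 0$ on $\overline\Omega$, $\Psi(x_0)=0$ and $\nabla\Psi(x_0)=-\gamma r^{-\gamma-1}\nu(x_0)\ne 0$.

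The main obstacle is that the naive candidate $\phi(x)=u(x_0)+A\Psi(x)$ need not dominate $u$ pointwise near $x_0$: since $\partial\Omega$ has only second-order contact with $\partial B_r(y_0)$ at $x_0$, $\Psi$ vanishes quadratically in tangential directions, whereas upper semicontinuity of $u$ only provides $u(x)\le u(x_0)+o(1)$ with no polynomial modulus. I would bypass this by the standard strict-maximum modification: consider the USC function $\Phi:=u-A\Psi$ on the compact set $\overline\Omega\cap\overline{B_\rho(x_0)}$ and let $x_\rho$ be a maximiser. The inequality $\Phi(x_\rho)\ge\Phi(x_0)=u(x_0)$ forces $u(x_\rho)\ge u(x_0)$ and $\Psi(x_\rho)\le (u(x_\rho)-u(x_0))/A$; taking $A$ large therefore makes $\Psi(x_\rho)$ arbitrarily small, and the exterior ball condition (together with continuity of $\Psi$ and the fact that $\Psi=0$ on $\overline\Omega$ only at $x_0$) then forces $x_\rho\to x_0$, so $x_\rho$ lies interior to $B_\rho(x_0)$. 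Setting
\[
\widetilde\phi(x):=u(x_\rho)+A\bigl(\Psi(x)-\Psi(x_\rho)\bigr)+\sigma|x-x_\rho|^{2},
\]
the function $\widetilde\phi$ touches $u$ strictly from above at $x_\rho$; the explicit identity $-\Delta_p\widetilde\phi(x_\rho)=A^{p-1}\bigl(-\Delta_p\Psi(x_\rho)\bigr)-2\sigma A^{p-2}|\nabla\Psi(x_\rho)|^{p-2}(d+p-2)$ ensures, for $\sigma>0$ small enough, that $\nabla\widetilde\phi(x_\rho)\ne 0$ and $-\Delta_p\widetilde\phi(x_\rho)>0$, while USC of $u$ and continuity of $g$ give $u(x_\rho)-g(x_\rho)>\delta$ for $\rho$ small.

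The generalized subsolution property then yields the desired contradiction: if $x_\rho\in\Omega$, the interior condition \eqref{eq:intC1} forces $-\Delta_p\widetilde\phi(x_\rho)\le 0$; if $x_\rho\in\partial\Omega$, the generalized boundary condition requires $\min\{-\Delta_p\widetilde\phi(x_\rho),\,u(x_\rho)-g(x_\rho)\}\le 0$, yet both entries are strictly positive. This proves \emph{(a)}; the proof of \emph{(b)} is symmetric, with the opposite-sign barrier $-\Psi$ built from the same exterior ball and a strict-minimum modification of the LSC function $v+A\Psi$. With both boundary inequalities established, the classical comparison principle for viscosity solutions of the $p$-Laplacian delivers $u\le v$ on $\overline\Omega$, completing the proof.
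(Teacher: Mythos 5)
Your proof is correct and takes a genuinely different route from the paper's. The paper also reduces the Strong Uniqueness Property to the classical comparison principle by establishing the equivalence of generalized and classical viscosity solutions on $C^2$-domains (Theorem~\ref{thm:equivSol}), but it does so with the Barles--Burdeau barrier
$\phi_\eps(y)=\frac{|y-x_0|^4}{\eps^4}+\frac{d(y)}{\eps^2}-\frac{d(y)^2}{2\eps^3}$
built from the signed distance function $d(\cdot)=\dist(\cdot,\partial\Omega)$, which is $C^2$ in a neighbourhood of the boundary. The paper then maximises $u-\phi_\eps$ over $\{d\le\eps\}\cap\overline U\cap\overline\Omega$, shows $|y_\eps-x_0|=O(\eps)$, $d(y_\eps)=o(\eps^2)$, and checks that $|\nabla\phi_\eps(y_\eps)|\sim\eps^{-2}\neq0$ and $-\Delta_p\phi_\eps(y_\eps)>0$ for $\eps$ small, after which the generalized boundary condition forces $u(y_\eps)\le g(y_\eps)$ and the limit $\eps\to0$ gives $u(x_0)\le g(x_0)$. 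Your argument replaces the distance-function barrier with an exterior-ball radial barrier $\Psi(x)=r^{-\gamma}-|x-y_0|^{-\gamma}$ (the fundamental solution of the $p$-Laplacian, up to normalisation), and correctly identifies the obstacle that $\Psi$ only vanishes to second order tangentially; your strict-maximum modification (maximising $u-A\Psi$, sending $A\to\infty$ to force $x_\rho\to x_0$, then adding $\sigma|x-x_\rho|^2$) handles this cleanly, and your formula $-\Delta_p\widetilde\phi(x_\rho)=A^{p-1}(-\Delta_p\Psi(x_\rho))-2\sigma A^{p-2}|\nabla\Psi(x_\rho)|^{p-2}(d+p-2)$ checks out. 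What each approach buys: the paper's barrier is built directly against the geometry of $\partial\Omega$, so the gradient lower bound $|\nabla\phi_\eps(y_\eps)|\gtrsim\eps^{-2}$ is immediate from $|\nabla d|=1$ and there is no need for a perturbation parameter $\sigma$; your exterior-ball construction only requires a uniform exterior sphere condition (so it would in fact work for $C^{1,1}$ domains), makes the $p$-superharmonicity of the barrier explicit via the radial ODE, and the $\gamma>\max\{0,(d-p)/(p-1)\}$ constraint mirrors the fundamental-solution exponent $\xi=(d-p)/(p-1)$ that reappears in the paper's Lipschitz-boundary argument (Lemma~\ref{Lemma induction k}). The two minor points you should tighten if writing this in full are: (i) the touching of $\widetilde\phi$ is local on $\overline\Omega\cap\overline{B_\rho(x_0)}$, so one needs the standard remark that local strict maxima suffice (or extend $\widetilde\phi$ outside the ball); and (ii) the conclusion ``$u\le g$ pointwise on $\partial\Omega$'' is indeed strictly stronger than condition \eqref{eq:BC1} (which is conditional on admissible test functions existing), so the reduction to the classical comparison principle goes through.
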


To show Proposition \ref{prop:SUP} we will prove that, in fact,  the notions of viscosity $p$-subsolution and generalized viscosity $p$-subsolution are equivalent for $C^2$-domains and then use the standard comparison principle for viscosity solutions.

\begin{theorem}\label{thm:equivSol}
Assume $p\in(1,\infty]$, $\Omega\subset \R^d$ be a $C^2$-domain and $g\in C(\partial \Omega)$. The following are equivalent
\begin{enumerate}[(a)]
\item\label{thm:equivSol:item-a} $u$ is a viscosity $p$-subsolution (resp. $p$-supersolution) of \eqref{dproblem}.
\item\label{thm:equivSol:item-b} $u$ is a generalized viscosity $p$-subsolution (resp. $p$-supersolution) of \eqref{dproblem}.
\end{enumerate}
\end{theorem}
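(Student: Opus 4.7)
My plan addresses the subsolution equivalence; the supersolution case is entirely analogous (swap sup/inf, $\geq/\leq$, and the sign of the $\alpha$ term). The direction (a) $\Rightarrow$ (b) is immediate: if $u(x_0)\leq g(x_0)$ at a boundary point, then $\min\{-\Delta_p\phi(x_0), u(x_0)-g(x_0)\}\leq 0$ automatically. The content lies in (b) $\Rightarrow$ (a), which I would argue by contradiction. Suppose $u$ is a generalized $p$-subsolution but (a) fails at some $x_0\in\partial\Omega$ with test function $\phi\in C^2(\overline{\Omega})$ satisfying $\phi(x_0)=u(x_0)$, $\phi>u$ on $\overline{\Omega}\setminus\{x_0\}$, $\nabla\phi(x_0)\neq 0$, and $u(x_0)>g(x_0)$. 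By (b) one necessarily has $-\Delta_p\phi(x_0)\leq 0$. The strategy is to produce a modified test function $\tilde\phi$ still touching $u$ strictly from above at $x_0$ with $\nabla\tilde\phi(x_0)\neq 0$ but with $-\Delta_p\tilde\phi(x_0)>0$; then applying the generalized condition to $\tilde\phi$ contradicts the positivity of both entries of the minimum.

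To construct $\tilde\phi$ I would exploit the $C^2$ regularity of $\partial\Omega$: the signed distance $d_\Omega(x):=\dist(x,\partial\Omega)$ (with the convention $d_\Omega>0$ inside $\Omega$) is $C^2$ in a tubular neighborhood of $\partial\Omega$, with $\nabla d_\Omega(x_0)=\nu$ (inward unit normal) and $D^2d_\Omega(x_0)\,\nu=0$ because $|\nabla d_\Omega|^2\equiv 1$ near $\partial\Omega$. Pick a cutoff $\eta\in C^2_c(B_\delta(x_0))$ with $\eta\equiv 1$ near $x_0$ and $\nabla\eta(x_0)=0$, and set
\[
\tilde\phi(x) := \phi(x) + \beta\, d_\Omega(x)\eta(x) - \alpha\,\bigl(d_\Omega(x)\eta(x)\bigr)^2,
\]
where $\alpha,\beta>0$ and $\delta<\beta/\alpha$. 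Since $d_\Omega\eta\leq \delta<\beta/\alpha$ on $\operatorname{supp}\eta$ and $d_\Omega\geq 0$ on $\overline{\Omega}$, the correction $d_\Omega\eta(\beta-\alpha d_\Omega\eta)$ is non-negative on $\overline{\Omega}$, giving $\tilde\phi-u=(\phi-u)+d_\Omega\eta(\beta-\alpha d_\Omega\eta)>0$ on $\overline{\Omega}\setminus\{x_0\}$ with equality only at $x_0$. Using $d_\Omega(x_0)=0$ and $\nabla\eta(x_0)=0$, a direct computation yields
\[
\nabla\tilde\phi(x_0)=\nabla\phi(x_0)+\beta\nu,\qquad D^2\tilde\phi(x_0)=D^2\phi(x_0)+\beta\, D^2d_\Omega(x_0)-2\alpha\,\nu\nu^T,
\]
and $\beta$ is chosen generically to avoid the at-most-two values for which $\nabla\tilde\phi(x_0)=0$ or, in the $p=\infty$ case, $\nu\cdot\nabla\tilde\phi(x_0)=0$.

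The key point is the $\alpha$-dependence of $-\Delta_p\tilde\phi(x_0)$. Using the decomposition $\Delta_p^N=\tfrac{1}{p}\Delta+\tfrac{p-2}{p}\Delta_\infty^N$ from Remark~\ref{rem:interpPlap} and $D^2 d_\Omega(x_0)\nu=0$, the $\alpha$-contribution to $-\Delta_p^N\tilde\phi(x_0)$ turns out to be
\[
\frac{2\alpha}{p}\bigl[1+(p-2)\cos^2\theta\bigr],\qquad \cos\theta:=\frac{\nu\cdot\nabla\tilde\phi(x_0)}{|\nabla\tilde\phi(x_0)|},
\]
and the bracket is bounded below by $\min(1,p-1)>0$ for every $p\in(1,\infty)$; for $p=\infty$, a direct calculation of $-\Delta_\infty\tilde\phi(x_0)$ gives an $\alpha$-contribution $2\alpha(\nu\cdot\nabla\tilde\phi(x_0))^2>0$ by the generic choice of $\beta$. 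Taking $\alpha$ sufficiently large therefore ensures $-\Delta_p\tilde\phi(x_0)>0$, which closes the contradiction.

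The main obstacle is the construction itself: any additive perturbation $\psi\geq 0$ vanishing at $x_0$ would be a local minimum of $\psi$ and so $D^2\psi(x_0)\geq 0$, which would make $-\Delta_p$ \emph{smaller}, not larger. The resolution exploits the boundary position of $x_0$ through the signed distance function: $d_\Omega$ vanishes to first order in the normal direction, so its square has Hessian $2\nu\nu^T$ at $x_0$ and, carried with a negative coefficient, supplies the required negative eigenvalue $-2\alpha$ in the normal direction, while the linear term $+\beta d_\Omega\eta$ together with the cutoff and the size condition $\delta<\beta/\alpha$ preserves the touching property $\tilde\phi\geq u$ on $\overline{\Omega}$.
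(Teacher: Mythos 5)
Your proof is correct but takes a genuinely different route from the paper's. The paper follows Barles--Burdeau: at a boundary point $x_0$ it builds, for each small $\eps$, the barrier
\[
\phi_\eps(y)=\frac{|y-x_0|^4}{\eps^4}+\frac{d(y)}{\eps^2}-\frac{d(y)^2}{2\eps^3},
\]
locates the maximum $y_\eps$ of $u-\phi_\eps$ near $x_0$, establishes the asymptotics $|y_\eps-x_0|=O(\eps)$ and $d(y_\eps)=o(\eps^2)$, shows that the $-d^2/(2\eps^3)$ term forces $-\Delta_p\phi_\eps(y_\eps)>0$, concludes $u(y_\eps)\le g(y_\eps)$ at the nearby touching point, and then lets $\eps\to0$. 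You instead perturb the given test function at $x_0$ itself by adding $\beta\,d\,\eta-\alpha(d\,\eta)^2$ (with the cutoff scale $\delta<\beta/\alpha$ so the added term is nonnegative on $\overline\Omega$), which preserves strict touching at $x_0$ while $-\alpha(d\,\eta)^2$ contributes $-2\alpha\,\nu\otimes\nu$ to the Hessian; the interpolation $\Delta_p^N=\tfrac1p\Delta+\tfrac{p-2}{p}\Delta_\infty^N$ then gives an $\alpha$-contribution to $-\Delta_p^N\tilde\phi(x_0)$ equal to $\tfrac{2\alpha}{p}\bigl[1+(p-2)\cos^2\theta\bigr]\ge\tfrac{2\alpha}{p}\min(1,p-1)>0$, and taking $\alpha$ large closes the argument. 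Both proofs exploit exactly the same structural fact --- at a boundary point a nonnegative perturbation may carry an arbitrarily negative Hessian eigenvalue in the inward normal direction --- but yours avoids the $\eps$-family, the $o(\eps^2)$ estimate, and the limit passage, landing the conclusion directly at $x_0$. What the paper's barrier buys in exchange is that it needs no pre-existing test function: it shows $u\le g$ at every boundary point with $u(x_0)>-\infty$, slightly more than the equivalence strictly requires. Your proposal is complete modulo routine bookkeeping: the generic choice of $\beta$ must ensure both $\nabla\tilde\phi(x_0)\ne0$ and, for $p=\infty$, $\nu\cdot\nabla\tilde\phi(x_0)\ne0$; both exclude at most finitely many values and, crucially, $\nabla\tilde\phi(x_0)=\nabla\phi(x_0)+\beta\nu$ is independent of $\alpha$, so the bound is uniform as $\alpha\to\infty$.
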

The proof of Theorem \ref{thm:equivSol} will be given later in this section. As mentioned before, Proposition \ref{prop:SUP} follows trivially from Theorem \ref{thm:equivSol} and the following standard comparison principle for viscosity solutions (cf. \cite{JLM01}, \cite{LM17}):

\begin{theorem}\label{thm:clasicalcompprinc}(Comparison principle for viscosity solutions) Let $p\in(1,\infty]$, $\Omega\subset \R^d$ be a Lipschitz domain, and $g\in C(\partial \Omega)$.
Let $u$ and $v$ be viscosity $p$-subsolution and $p$-supersolution respectively of \eqref{dproblem}. Then we have $u\leq v$ in $\overline{\Omega}$.
\end{theorem}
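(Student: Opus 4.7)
The plan is to argue by contradiction using the doubling-of-variables technique together with the Crandall--Ishii lemma, taking care of the singular nature of the $p$-Laplacian by a suitable perturbation of the test functions.

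Suppose towards contradiction that $M:=\max_{\overline{\Omega}}(u-v)>0$. Since $u\in\USC(\overline{\Omega})$ and $-v\in\USC(\overline{\Omega})$ and $\overline{\Omega}$ is compact, the maximum is attained at some $\hat x\in\overline{\Omega}$. If $\hat x\in\partial\Omega$, the boundary conditions \eqref{eq:BC1} and \eqref{eq:BC2} in the classical viscosity definition yield $u(\hat x)\le g(\hat x)\le v(\hat x)$, contradicting $M>0$. Hence the maximum is attained in $\Omega$, and after shrinking to a slightly smaller open set where $u-v>0$ we may work in the interior.

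Next, for small $\delta>0$ consider the perturbed doubling functional
\[
\Psi_{\alpha,\delta}(x,y)=u(x)-v(y)-\frac{\alpha}{2}|x-y|^2-\delta|x-x_0|^2,
\]
where $x_0\in\Omega$ is chosen so that the perturbation is harmless near $\hat x$ but guarantees that the resulting slope $\xi_\alpha+2\delta(x_\alpha-x_0)$ acting as the ``gradient of the test function'' does not vanish. Standard arguments give a maximizer $(x_\alpha,y_\alpha)$ with $x_\alpha,y_\alpha\to \hat x_\delta\in\Omega$ and $\alpha|x_\alpha-y_\alpha|^2\to 0$ as $\alpha\to\infty$. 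The Crandall--Ishii lemma then produces symmetric matrices $X_\alpha,Y_\alpha$ with
\[
(\alpha(x_\alpha-y_\alpha)+2\delta(x_\alpha-x_0),X_\alpha)\in\overline{J}^{2,+}u(x_\alpha),\qquad (\alpha(x_\alpha-y_\alpha),Y_\alpha)\in\overline{J}^{2,-}v(y_\alpha),
\]
together with the usual matrix bound $X_\alpha-Y_\alpha\le C\delta\,I+o(1)$ as $\alpha\to\infty$. Since the common slope involved is non-vanishing for generic $\delta$, the admissible test functions reproducing these jets have non-zero gradient, so the sub/supersolution inequalities for $-\Delta_p$ apply and yield
\[
\mathcal{F}_p\big(\xi^u_\alpha,X_\alpha\big)\le 0\le \mathcal{F}_p\big(\xi^v_\alpha,Y_\alpha\big),
\]
where $\mathcal{F}_p$ denotes the appropriate $p$-Laplace symbol. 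The ellipticity of $\mathcal{F}_p$ together with the matrix inequality provides a contradiction as $\alpha\to\infty$ and then $\delta\to 0$.

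The main obstacle is precisely the degeneracy point $\alpha(x_\alpha-y_\alpha)=0$, since Definitions~\ref{def:ViscSub}--\ref{def:ViscSuper} permit only test functions with non-vanishing gradient and the symbol $|\xi|^{p-2}$ is singular (when $p<2$) or degenerate (when $p>2$) at $\xi=0$. The role of the perturbation $\delta|x-x_0|^2$ is exactly to rule out this coincidence; an equivalent device is to use a test function of the form $c|x-y|^q$ with $q>2$ so that a non-trivial non-zero slope survives in the jets. For $p\in(1,\infty)$ this step can alternatively be bypassed by invoking the equivalence between viscosity and weak solutions established in \cite{JLM01}, from which the standard weak comparison principle for the divergence-form $p$-Laplacian closes the argument; for $p=\infty$ the perturbation approach above is carried out directly on the normalized operator $\Delta_\infty^N$ using its upper/lower semicontinuous envelopes, as is standard for singular operators in viscosity theory.
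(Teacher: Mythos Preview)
The paper does not prove Theorem~\ref{thm:clasicalcompprinc} at all; it is stated as a known result with references to \cite{JLM01} and to appendix~2 of \cite{LM17}. So there is no ``paper's own proof'' to compare with, and your ultimate fallback---invoking the equivalence of viscosity and weak solutions from \cite{JLM01} and then the weak comparison principle for $p\in(1,\infty)$, and standard envelope arguments for $p=\infty$---is exactly what the authors rely on.

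That said, the direct viscosity argument you outline before the fallback has a genuine gap. The perturbation $\delta|x-x_0|^2$ is added only in the $x$-variable, so the two slopes produced by the Crandall--Ishii lemma are
\[
\xi^u_\alpha=\alpha(x_\alpha-y_\alpha)+2\delta(x_\alpha-x_0),\qquad \xi^v_\alpha=\alpha(x_\alpha-y_\alpha).
\]
There is no ``common slope'', and the perturbation does nothing for $\xi^v_\alpha$: if $x_\alpha=y_\alpha$ then $\xi^v_\alpha=0$ and Definition~\ref{def:ViscSuper} gives you no information about $v$ at $y_\alpha$. Thus the sentence ``the role of the perturbation $\delta|x-x_0|^2$ is exactly to rule out this coincidence'' is incorrect as written. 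The alternative you mention, a penalty $|x-y|^q$ with $q>2$, is indeed the device used in \cite{JLM01}; there the point is that when $x_\alpha=y_\alpha$ the jets obtained are $(0,X_\alpha)$ and $(0,Y_\alpha)$ with $X_\alpha\le 0\le Y_\alpha$, and one exploits a separate lemma characterising what happens at test functions with vanishing gradient. That extra step is precisely the nontrivial content that your sketch omits. A smaller point: on a merely Lipschitz domain your boundary reduction also implicitly uses that at every $\hat x\in\partial\Omega$ a test function with nonvanishing gradient touching $u$ from above (and $v$ from below) exists, which the definitions in this paper do not provide automatically; this is handled in the references but should be flagged.
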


\begin{proof}[Proof of Theorem \ref{thm:equivSol}]
We will prove the result for subsolutions since for supersolutions follows exactly in the same way.
It is trivial to show that \eqref{thm:equivSol:item-a}$\implies$\eqref{thm:equivSol:item-b} since 
\[
\min\{-\Delta_p\phi(x_0), u(x_0)-g(x_0)\} \leq u(x_0)-g(x_0) \leq 0.
\]
To prove \eqref{thm:equivSol:item-b}$\implies$\eqref{thm:equivSol:item-a} we need some more work. The proof is inspired in \cite{BB95}, where the case $p=2$ is presented. The idea of the proof is to try to find a suitable test function $\phi$ at $x_0\in \partial \Omega$ such that $-\Delta_p\phi(x_0)>0$. This will immediately imply that 
\[
\min\{-\Delta_p\phi(x_0), u(x_0)-g(x_0)\}  \leq 0 \iff u(x_0)-g(x_0)\leq0
\]
and will conclude the proof.
\end{proof}
\emph{1.  Regularity of the domain and the distance function.}

Consider the function $d(x):=\textrm{dist}(x, \partial\Omega)$. Since $\Omega$ is a $C^2$-domain,  for each $x_{0}\in \partial\Omega$ we can find an open set $U$ such that $x_0\in U$ and $d\in C^2_b(\overline{U})$. Moreover, we have that $d(x)>0$ if $x\in \overline{U}\cap \Omega$, $d(x)=0$ if $x\in \overline{U}\cap\partial\Omega$ and 
\begin{equation*}\label{eq:propdist}
|\nabla d(x)|=|-\vec{\mathbf{n}}(x)|=1 \quad \textup{and} \quad \|D^2d(x)\|_{L^\infty(\overline{U})}<+\infty.
\end{equation*}
We want to emphasize that the point of making the condition local in $U$ is to avoid the ridge points of the domain, where the distance function is not smooth. 

\emph{2.  Properties of a suitable test function (I), after \cite{BB95}}.
We  include the details of this step for the convenience of the reader.
Fix a point $x_{0}\in\partial\Omega$. 
For small $\eps>0$  consider the function
$$\phi_{\eps}(y)=\frac{|y-x_{0}|^{4}}{\eps^{4}}+ \frac{d(y)}{\eps^{2}}-\frac{d(y)^{2}}{2 \eps^{3}},$$
which is smooth in $\overline{U}$. 
Note that $u(y)-\phi_{\eps}(y)\in\USC(\overline{U}\cap\overline{\Omega})$.  In particular, if we consider the set
$$ K_{\eps}:= \{ y\colon d(y)\le \eps\} \cap \overline{U}\cap\overline{\Omega}.$$
then we also have that $u(y)-\phi_{\eps}(y)\in\USC(K_\eps)$. Therefore
$u-\phi_{\eps}$ attains its maximum in $K_{\eps}$ at some point $y_{\eps}\in K_{\eps}$.
Observe that 
\begin{equation}\label{eq:24}
\frac{d(y)}{\eps^{2}}-\frac{d(y)^{2}}{2 \eps^{3}}= \frac{d(y)}{\eps^2}\left(1-\frac{d(y)}{2\eps}\right)\geq0 \quad \textup{for all} \quad y\in K_\eps.
\end{equation}
Since $\phi_\eps(x_0)=0$, we have that
\begin{equation}\label{eq:25}
\begin{split}
u(x_{0})  &=   u(x_{0})-\phi_{\eps}(x_{0})\leq u(y_{\eps}) -\phi_{\eps}(y_{\eps})\\
&=  u(y_{\eps}) -\frac{|y_{\eps}-x_{0}|^{4}}{\eps^{4}}- \frac{d(y_{\eps})}{\eps^{2}}+\frac{d(y_{\eps})^{2}}{2 \eps^{3}}.
\end{split}
\end{equation}
Using \eqref{eq:24} in \eqref{eq:25} we get
\[
u(x_{0}) \leq u(y_{\eps}) -\frac{|y_{\eps}-x_{0}|^{4}}{\eps^{4}}.
\]

If $u(x_{0})=-\infty$ we have nothing to prove. Therefore, we assume that $u(x_{0})> - \infty$.
Since $u$ is bounded from above because it is USC in $\overline{\Omega}$,  the previous estimate ensures that $\frac{|y_{\eps}-x_{0}|}{\eps}$ is also bounded. It also implies that 
$\lim_{\eps\to0}y_{\eps}=x_{0}$
and 
\begin{equation}\label{eq:orderUclosetoMax}
u(x_{0})\le u(y_{\eps}).
\end{equation}
By the upper semicontinuity of $u$ we get
$$u(x_{0})\le \liminf_{\eps\to0}u(y_{\eps})\le  \limsup_{\eps\to0}u(y_{\eps})\le u(x_{0}).$$
Next, we take $\displaystyle\liminf_{\eps\to0}$ in (\ref{eq:25}) to get 
$$
 u(x_{0})  \le u(x_{0}) + \liminf_{\eps\to0}
 \left[
  -\frac{|y_{\eps}-x_{0}|^{4}}{\eps^{4}}- \frac{d(y_{\eps})}{\eps^{2}}+\frac{d(y_{\eps})^{2}}{2 \eps^{3}}
 \right],
$$
so that
$$\limsup_{\eps\to0}\left[
  \frac{|y_{\eps}-x_{0}|^{4}}{\eps^{4}}+\left( \frac{d(y_{\eps})}{\eps^{2}}-\frac{d(y_{\eps})^{2}}{2 \eps^{3}}\right)\right]
 \le 0.
$$
Since both terms in the left hand side are non-negative, we conclude that they both go to zero. Since we also have
$$\frac{d(y_{\eps})}{2 \eps^{2}} \le \frac{d(y_{\eps})}{\eps^{2}}-\frac{d(y_{\eps})^{2}}{2 \eps^{3}},$$
we conclude that
$$ d(y_{\eps})=o(\eps^{2}).$$
In particular we cannot have $d(y_{\eps})=\eps$ for small $\eps$. Since $y_{\eps}\to x_{0}$ then  $y_\eps\not\in \partial U$. We conclude that $y_{\eps}$ is a  point of  \textit{local} maximum for $u-\phi_{\eps}$ in
$K_{\eps}$, thus we can extend $\phi_{\eps}$ so that $y_{\eps}$ is point of local maximum for $u-\phi_{\eps}$  in $\overline{\Omega}$.

\emph{3.  Properties of a suitable test function (II). } \nopagebreak

In order to prove that $\phi_\eps$ is a suitable test function at $y_\eps$ for Definition \ref{def:genVS} we show that $\nabla\phi(y_\eps)\not=0$. Differentiating we get
\begin{equation}\label{eq:nablaphieps}
\nabla\phi_{\eps}(y)=\frac{4 |y-x_{0}|^{2}}{\eps^{4}} (y-x_0)+ \frac{\nabla d(y)}{\eps^{2}}-\frac{d(y)}{\eps^{3}}\nabla d(y),
\end{equation}
and
\begin{equation*}
\begin{split}
|\nabla\phi_{\eps}(y)|^2=& \displaystyle{\frac{16 |y-x_{0}|^{6}}{\eps^{8}} +\frac{1}{\eps^4}+\frac{d^2(y)}{\eps^6}
+ \frac{8 |y-x_0|^2}{\eps^{6}} \langle y-x_0,\nabla d(y)\rangle}\\
&- \displaystyle{\frac{8 d(y)|y-x_0|^2}{\eps^{7}}\langle y-x_0,\nabla d(y)\rangle-\frac{2}{\eps^5}d(y)}.
\end{split}
\end{equation*}
We recall that, from steps 1 and 2, we have that  $|y_\eps-x_0|\le c \eps$, $\Delta d$ is bounded, $|\nabla d|=1$, and $d(y_\eps)=o({\eps}^2)$.
We conclude that 
\begin{equation*}\label{gradientbound}
\frac{1}{\tilde{c} \eps^4}\le  |\nabla\phi_{\eps}(y_\eps)|^2\le \frac{\tilde{c}}{ \eps^4},
\end{equation*} for some constant $\tilde{c}>0$ and $\eps$ small enough. In particular, we always have $ \nabla\phi_{\eps}(y_\eps) \not= 0$.

\emph{4. Positivity of $-\Delta_p\phi_\eps(y_\eps)$.}

We will estimate $-\Delta\phi_\eps(y_\eps)$ and $-\Delta_\infty^N\phi_\eps(y_\eps)$ separately and the result for $-\Delta_p\phi_\eps(y_\eps)$ will follow from the interpolation between these operators given in Remark \ref{rem:interpPlap}.

Differentiating \eqref{eq:nablaphieps} and using one more time that $|y_\eps-x_0|\le c \eps$, $\Delta d$ is bounded and $|\nabla d|=1$  we get
\begin{equation*}
\begin{split}
\Delta \phi_{\eps}(y_{\eps}) &=  \frac{(4n+8)|y_{\eps}-x_{0}|^{2}}{\eps^{4}}+ \frac{\Delta d(y_{\eps})}{\eps^{2}}-\frac{\left[2 |\nabla d(y_{\eps})|^{2}+ 2 d(y_\eps) \Delta d(y_{\eps})
\right]}{2\eps^{3}}  \\
& \le  c_{1}\frac{4n+8}{\eps^{2}} + \frac{c_{2}}{\eps^{2}}-\frac{1}{\eps^{3}}+ \frac{d(y_{\eps})
|\Delta d (y_{\eps})|}{\eps^{3}},
\end{split}
\end{equation*}
The previous estimate, together with $d(y_{\eps})=o(\eps^{2})$ shows that for some $C>0$ we have that 
\begin{equation}\label{lapbound}
\Delta \phi_{\eps}(y_{\eps})\le \frac{C}{\eps^{2}} - \frac{1}{\eps^{3}}
\end{equation}
We will find an estimate for  $\Delta_\infty^N\phi_\eps(y_\eps)$  using
 $$ |\nabla\phi_{\eps}(y)|^2 \Delta_\infty^N \phi_\eps (y)=\Delta_\infty \phi_\eps(y)= \langle D^2\phi_\eps (y)\cdot
\nabla\phi_{\eps}(y),\nabla\phi_{\eps}(y)
\rangle.
$$ 
Note that
$$D^2\phi_\eps (y)= \frac{4|y-x_0|^2 I_{n} }{\eps^4} + \frac{8(y-x_0)\otimes (y-x_0)}{\eps^4} 
+\frac{D^2d(y)}{\eps^2}-\frac{d(y)D^2 d(y)}{\eps^3}- \frac{\nabla d(y) \otimes \nabla d(y)}{\eps^3}
$$ so that the expression $\langle D^2\phi_\eps (y)\cdot
\nabla\phi_{\eps}(y),\nabla\phi_{\eps}(y)
\rangle 
$ has 45 terms. One can compute them explicitly, however it is interesting first to notice the order of magnitude of each term when $y=y_\eps$. More precisely, using once again that $|y_\eps-x_0|\le c \eps$, $D^2 d$ is bounded, $|\nabla d|=1$ and $d(y_{\eps})=o(\eps^{2})$ we have that 
\begin{equation*}
\begin{split}
D^2\phi_\eps (y_\eps)=&\underbrace{\frac{4|y_\eps-x_0|^2 I_{n} }{\eps^4}}_{\sim \eps^{-2}} + \underbrace{\frac{8(y_\eps-x_0)\otimes (y_\eps-x_0)}{\eps^4}}_{\sim \eps^{-2}}
+\underbrace{\frac{D^2d(y_\eps)}{\eps^2}}_{\sim \eps^{-2}}\\
&-\underbrace{\frac{d(y_\eps)D^2 d(y_\eps)}{\eps^3}}_{\sim \eps^{-1}}- \underbrace{\frac{\nabla d(y_\eps) \otimes \nabla d(y_\eps)}{\eps^3}}_{\sim\eps^{-3}}
\end{split}
\end{equation*}
and
\[
\nabla\phi_{\eps}(y_\eps)=\underbrace{\frac{4 |y_\eps-x_{0}|^{2}}{\eps^{4}} (y_\eps-x_0)}_{\sim \eps^{-1}}+ \underbrace{\frac{\nabla d(y_\eps)}{\eps^{2}}}_{\sim \eps^{-2}}-\underbrace{\frac{d(y_\eps)}{\eps^{3}}\nabla d(y_\eps)}_{\sim \eps^{-1}},
\]
where by $\sim$ we mean that the absolute size is at the most of the denoted magnitude.
It is clear that the positive dominating  term in  $\Delta_\infty \phi_\eps(y_\eps)$ is bounded by $C/\eps^{6}$ for $\eps$ small enough. One the other hand, the negative dominating term is precisely given by 
\[
-\left\langle \frac{\nabla d(y_\eps) \otimes \nabla d(y_\eps)}{\eps^3}\cdot \frac{\nabla d(y_\eps)}{\eps^{2}}, \frac{\nabla d(y_\eps)}{\eps^{2}} \right\rangle=- \frac{1}{\eps^7}.
\]
We conclude then that $\Delta_\infty \phi_\eps(y)\leq C\eps^{-6}- \eps^{-7}$, and 
\begin{equation}\label{infylapbound}
\Delta_\infty^N \phi_{\eps}(y_{\eps})\le \frac{C_1}{\eps^{2}} - \frac{C_2}{\eps^{3}}.
\end{equation}
 Combining \eqref{lapbound} and  \eqref{infylapbound} we get that 
\begin{equation*}\label{plapbound}
\begin{split}\frac{1}{p}
|\nabla \phi_{\eps}(y_\eps)|^{{2-p}}\Delta_{p}\phi_{\eps}(y_\eps)&=\Delta_{p}^N\phi_{\eps}(y_\eps)= {\frac{1}{p} \Delta \phi_\eps(y_\eps) + \frac{(p-2)}{p} \Delta_\infty^N \phi_\eps (y_\eps)}\\
&\leq \frac{p-1}{p}\left(\frac{C_1}{\eps^2}-\frac{C_2}{\eps^3}\right).
\end{split}
\end{equation*}
Since by assumption $p>1$ and $|\nabla \phi_{\eps}(y_\eps)|\not=0$ we get that for $\eps$ small enough we have
\begin{equation}\label{eq:contraest}
-\Delta_p\phi_\eps(y_\eps)>0.
\end{equation}
The same conclusion follows for $p=\infty$ directly from \eqref{infylapbound}.

\emph{5. Conclusion.}\par
 From  the definition of generalized viscosity solution we get 
\[
\min\{
-\Delta_{p} \phi_{\eps}(y_{\eps}),\,  u(y_\eps)-g(y_{\eps})
\}\le 0.
\]

From \eqref{eq:contraest}, it follows that $u(y_\eps) \le g(y_\eps)$. By taking limits, using \eqref{eq:orderUclosetoMax}, and the continuity of $g$ the result follows. 
\section{Proof of convergence: A numerical analysis approach}\label{sec:conv}
In order to prove convergence of solutions of the dynamic programming principle \eqref{eq:expDPP1} to the solution of  the Dirichlet problem \eqref{dproblem} we need to ensure three properties of \eqref{eq:expDPP1}: Stability, Monotonicity and Consistency. 

These names are taken from the numerical analysis framework. Stability is precisely given by the assumption \eqref{as:stab}. Monotonicity of \eqref{eq:expDPP1} is a standard property that follows directly from the properties of an average operator. On the other hand, the consistency needed below in this context is slightly different than the usual one, but it will be a consequence of the concept of mean value property.

Let us define the following operator for a  smooth functions $\phi$ that depends explicitly on $\eps>0$, $x\in \Omega_E$, $\phi(x)$ and $\phi$ (through the average $A_{\eps}$):
\begin{equation}\label{eq:DPPscheme}
S(\eps, x, \phi(x), \phi)=\left\{\begin{array}{cccl}
\frac{1}{c_{p,d} \eps^2}\left(\phi(x)-A_\eps[\phi](x)\right)& \text{ if } &x\in\Omega\\
\phi(x)-G(x) & \text{ if } &x\in {O}
\end{array}\right.
\end{equation}

Then, the dynamic programming principle \eqref{eq:expDPP1} can be formulated as the following scheme:
\begin{equation}\label{eq:DPP2}
S(\eps, x, u_\eps(x), u_\eps)=0 \quad \textup{for all} \quad x\in \Omega_E.
\end{equation}

\begin{remark}
The scheme \eqref{eq:DPP2} encodes both the interior equation and the boundary condition.
\end{remark}

\subsection{Monotonicity}
We have the following result of monotonicity for the scheme \eqref{eq:DPP2}.

\begin{lemma}\label{lem:mono}
Let $\Omega \subset \R^d$ be a bounded domain and $A_\eps$ be an average. Then $S$ defined by \eqref{eq:DPPscheme} is monotone, that is,
for all $\eps>0$, $x\in \Omega_E$,  $t\in \R$ and $u,v\in B(\Omega_E)$ such that $u\leq v$ we have that 
\[
S(\eps, x, t, v)\leq S(\eps, x, t, u)
\]
\end{lemma}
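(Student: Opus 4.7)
The plan is to unfold the definition of $S$ in \eqref{eq:DPPscheme} and verify the inequality separately on the two pieces of $\Omega_E$. The statement is essentially a direct consequence of the monotonicity axiom of an average operator, combined with the fact that on the outer strip the scheme depends only on the pointwise value $t$ and the prescribed boundary data $G$, not on the argument function.

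First I would fix $\eps>0$, $x\in\Omega_E$, $t\in\R$ and $u,v\in B(\Omega_E)$ with $u\le v$, and split into two cases. If $x\in O$, then by \eqref{eq:DPPscheme}
\[
S(\eps,x,t,v)=t-G(x)=S(\eps,x,t,u),
\]
so the desired inequality holds with equality. If $x\in \Omega$, then
\[
S(\eps,x,t,v)-S(\eps,x,t,u)=\frac{1}{c_{p,d}\,\eps^{2}}\bigl(A_\eps[u](x)-A_\eps[v](x)\bigr).
\]
Since $u\le v$ on $\Omega_E$, the monotonicity axiom (property~(2)) of the average $A_\eps$ gives $A_\eps[u](x)\le A_\eps[v](x)$; as $c_{p,d}\,\eps^{2}>0$, the right-hand side above is non-positive, which is exactly the claim.

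There is no real obstacle here: the argument is a one-line verification in each of the two regimes of \eqref{eq:DPPscheme}, and the only structural input used is the monotonicity axiom in the definition of an average. In particular, neither the stability nor the affine invariance of $A_\eps$, nor any property of the domain beyond boundedness, plays a role; this is why the lemma holds for arbitrary bounded $\Omega$ and arbitrary $t\in\R$ (the value $t$ cancels).
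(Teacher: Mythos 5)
Your proof is correct and follows essentially the same route as the paper's: split into the cases $x\in\Omega$ and $x\in O$, and use the monotonicity axiom of the average $A_\eps$ together with the positivity of $c_{p,d}\eps^2$ in the interior case, with equality holding trivially on $O$.
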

\begin{proof}
Since $A_\eps$ is an average, we have that $A_\eps[u]\leq A_\eps[v]$, so if $x\in \Omega$, then
\[
S(\eps, x, t, v)=\frac{1}{c_{p,d} \eps^2}\left(t-A_\eps[v](x)\right)\leq\frac{1}{c_{p,d} \eps^2}\left(t-A_\eps[u](x)\right)= S(\eps, x, t, u).
\]
On the other hand, if $x\in O$, 
\[
S(\eps, x, t, v)=t-G(x)=S(\eps, x, t, u).\qedhere
\]
\end{proof}
\subsection{Consistency} The following consistency result incorporates  both the interior equation and the boundary condition of \eqref{eq:expDPP1}. Note that it is related to the concept of generalized viscosity solution.

\begin{lemma}\label{lem:cons}
Let $\{A_\eps\}_{\eps>0}$ be a  mean value property for the $p$-Laplacian and let $S$ be given by \eqref{eq:DPPscheme}. Then, for all $x\in \overline{\Omega}$ and $\phi\in C_b^\infty(\Omega_{E})$ such that $\nabla\phi(x)\not=0$, we have that
\[
\limsup_{\eps\to0,\ y\to x,\ \xi \to 0} S(\eps, y, \phi(y)+\xi, \phi+\xi) = \left\{\begin{array}{cccl}
-\Delta_p^N\phi(x)& \text{ if } &x\in\Omega\\
\max\{-\Delta_p^N\phi(x), \phi(x)-G(x)\}& \text{ if } &x\in \partial{\Omega}
\end{array}\right.
\]
and 
\[
\liminf_{\eps\to0,\ y\to x,\ \xi \to 0} S(\eps, y, \phi(y)+\xi, \phi+\xi) = \left\{\begin{array}{cccl}
-\Delta_p^N\phi(x)& \text{ if } &x\in\Omega\\
\min\{-\Delta_p^N\phi(x), \phi(x)-G(x)\}& \text{ if } &x\in \partial{\Omega}.
\end{array}\right.
\]
\end{lemma}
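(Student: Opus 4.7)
The proof rests on two ingredients: the affine invariance of $A_\eps$, and the uniform asymptotic expansion guaranteed by the mean value property. First I would eliminate the $\xi$ by using $A_\eps[\phi+\xi](y)=A_\eps[\phi](y)+\xi$ for any constant $\xi \in \R$. Substituting into \eqref{eq:DPPscheme} yields, for $y\in \Omega$,
$$
S(\eps,y,\phi(y)+\xi,\phi+\xi)=\frac{1}{c_{p,d}\eps^{2}}\bigl(\phi(y)-A_\eps[\phi](y)\bigr),
$$
which is independent of $\xi$, while for $y\in O$ the expression collapses to $\phi(y)+\xi - G(y)$. This reduction decouples the two pieces of the scheme.

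Next I would split according to whether $x\in\Omega$ or $x\in\partial\Omega$. If $x\in\Omega$, then by openness of $\Omega$ we have $y\in\Omega$ for $y$ sufficiently close to $x$, and only the interior formula is relevant. Since $\nabla\phi(x)\neq 0$ and $\phi\in C^{\infty}$, continuity provides a neighbourhood $V$ of $x$ with $\nabla\phi\neq 0$ on $V$, so the mean value property \eqref{eq:MVP} gives
$$
\frac{1}{c_{p,d}\eps^{2}}\bigl(\phi(y)-A_\eps[\phi](y)\bigr)=-\Delta_p^{N}\phi(y)+o(1)
$$
with the error $o(1)$ uniform in $y\in\overline{\Omega}$. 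Since $-\Delta_p^{N}\phi$ is continuous at $x$ (again because $\nabla\phi(x)\neq 0$), both the limsup and liminf as $\eps\to 0$, $y\to x$, $\xi\to 0$ equal $-\Delta_p^{N}\phi(x)$, as claimed.

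If instead $x\in\partial\Omega\subset O$, then $y\in \Omega_E$ close to $x$ may lie in $\Omega$ or in $O$, giving two competing modes of approach. Along sequences with $y\in\Omega$ the argument of the previous paragraph yields the value $-\Delta_p^{N}\phi(x)$, while along sequences with $y\in O$ continuity of $\phi$ and $G$ yields
$$
\phi(y)+\xi-G(y)\ \longrightarrow\ \phi(x)-G(x).
$$
Taking the supremum (respectively infimum) of all limit points along the joint limit produces $\max\{-\Delta_p^{N}\phi(x),\,\phi(x)-G(x)\}$ for the limsup and $\min\{-\Delta_p^{N}\phi(x),\,\phi(x)-G(x)\}$ for the liminf, matching the statement.

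The only delicate point I anticipate is justifying that \eqref{eq:MVP} may be applied at the moving base-point $y$ rather than at a fixed point: one needs both the nonvanishing of $\nabla\phi$ near $x$ (secured by smoothness) and the uniformity of the $o(\eps^{2})$ remainder in Definition \ref{def:MVP}, so that the double limit in $\eps$ and $y$ can be taken in any order and collapsed using continuity of $-\Delta_p^{N}\phi$ at $x$. All other steps are routine consequences of affine invariance and continuity.
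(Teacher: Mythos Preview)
Your proposal is correct and follows essentially the same approach as the paper's proof: eliminate $\xi$ via affine invariance, then for interior $x$ restrict to a neighbourhood in $\Omega$ where $\nabla\phi\neq 0$ and invoke the uniform $o(\eps^2)$ in Definition~\ref{def:MVP} together with continuity of $-\Delta_p^N\phi$, and for boundary $x$ split the joint limit into approaches from $\Omega$ and from $O$ and take the max/min of the two resulting values. Your identification of the ``delicate point'' (uniformity of the remainder so the double limit can be handled) is exactly what the paper relies on as well.
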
 
\begin{proof}
We present the proof for the $\limsup$, since the other one is similar. First note that, since $A_\eps[\phi+\xi]=A_\eps[\phi]+\xi$, we have that
\begin{equation*}
S(\eps, y, \phi(y)+\xi, \phi+\xi)=\left\{\begin{array}{cccl}
\frac{1}{c_{p,d} \eps^2}\left(\phi(y)-A_\eps[\phi](y)\right)& \text{ if } &y\in\Omega\\
\phi(y)-G(y)+\xi & \text{ if } &y\in O.
\end{array}\right.
\end{equation*}

Fix first $x\in \Omega$ and let $B$ denote a ball centred at $x$ and radius smaller than $d(x,\partial \Omega)/2$ (so that    $B\subset\Omega$) and such that $\nabla \phi(y)\not=0$ for all $y\in B$ (we can assume this by regularity of $\phi$ and the fact that $\nabla\phi(x)\not=0$). Then,  by the fact that $y\in\Omega$ (limit as $\xi\to0$),  the uniformity in the mean value property in \eqref{eq:MVP} (limit as $\eps\to0$) and  by regularity of $-\Delta_p^N \phi$ (limit as $y\to x$), we have
\begin{equation*}
\begin{split}
\limsup_{\eps\to0,\ y\to x,\ \xi \to 0} S(\eps, y, \phi(y)+\xi, \phi+\xi)&= \limsup_{\eps\to0,B\ni y\to x,\ \xi \to 0} S(\eps, y, \phi(y)+\xi, \phi+\xi)\\
&=\limsup_{\eps\to0,B\ni y\to x} \frac{1}{c_{p,d} \eps^2}\left(\phi(y)-A_\eps[\phi](y)\right)\\
& = \limsup_{\eps\to0, B\ni y\to x} (-\Delta_p^N \phi(y)+ o_\eps(1))= -\Delta_p^N \phi(x).
\end{split}
\end{equation*}
Next, let $x\in \partial \Omega$. We can approach $x$ both from points $y\in \Omega$ and $y\in O$. Again, we can assume that $\nabla \phi(y)\not=0$ for all $y$ close enough to $x$. We then have
\begin{equation*}
\begin{split}
&\limsup_{\eps\to0,\ y\to x,\ \xi \to 0} S(\eps, y, \phi(y)+\xi, \phi+\xi)\\
&\hspace{2 em}= \max \left\{ \limsup_{\eps\to0,\Omega\ni y\to x} \frac{1}{c_{p,d} \eps^2}\left(\phi(y)-A_\eps[\phi](y)\right), \limsup_{O\ni y\to x, \xi \to 0} (\phi(y)-G(y)+\xi)\right\}\\
&\hspace{2 em}= \max \left\{-\Delta_p^N \phi(x), \phi(x)-G(x)\right\}.\qedhere
\end{split}
\end{equation*}
\end{proof}
\subsection{Proof of convergence for $C^2$-domains}
We are now ready to prove our most general convergence result regarding $C^2$-domains.
\begin{proof}[Proof of Theorem \ref{thm:main}]
This argument is  from \cite{BS91} in the context of numerical schemes. Define
\begin{equation}\label{barles}
\vv(x)=\limsup_{\eps\to0, y\to x  } u_\eps(y), \qquad \underline{v}(x)=\liminf_{\eps\to0, y\to x  } u_\eps(y).
\end{equation}
By  the assumption \eqref{as:stab} both $\vv$ and $\underline{v}$ are bounded functions in $\overline{Q}_T$. Also, by definition, $\vv$ is USC and $\underline{v}$ is LSC and $\underline{v}\leq \vv$.

Assume for a moment (we will prove it later) that $\vv$ is a generalized viscosity $p$-subsolution and $\underline{v}$ is a generalized viscosity $p$-supersolution of \eqref{dproblem}. Then, the strong uniqueness property given by Proposition \ref{prop:SUP}, ensures also that $\underline{v}\geq \vv$. In fact, this proves that $v:=\underline{v}=\vv$ is a generalized viscosity $p$-solution of \eqref{dproblem} and also that $u_\eps\to v$  as $\eps\to0$ uniformly in $\overline{\Omega}$. The equivalence of notations of viscosity solutions given by Theorem \ref{thm:equivSol} concludes the proof.

Next, we prove that $\vv$ is a generalized viscosity $p$-subsolution of \eqref{dproblem}. Let $\phi\in C^\infty_b(\Omega_E)$ and $x_0\in \overline{\Omega}$ such that $\vv(x_0)=\phi(x_0)$, $\vv< \phi$ if $x\not=x_0$ ($\vv-\phi$ reaches a global maximum on $\overline{\Omega}$) and $\nabla\phi(x_0)\not=0$. Then for all $x\in \overline{\Omega}$, we have that
\[
\vv(x)-\phi(x)\leq0=\vv(x_0)-\phi(x_0).
\]
Thus, we can find a sequence $\{\eps_n\}_{n\geq0}$ and $\{y_n\}_{n\geq0} \subset \Omega_E$, such that
\[
\eps_n\to 0^+, \quad y_n\to x_0, \quad  u_{\eps_n}(y_n)\to \vv (x_0)  \quad \textup{and}\quad \nabla \phi(y_n) \not=0
\]
with $y_n$ being a global max of $u_{\eps_n}-\phi$.  Let now $\xi_n:=u_{\eps_n}(y_n)-\phi(y_n)$. We have that $\xi_n \to0$ and $u_{\eps_n}(x)-\phi(x)\leq \xi_n$, that is,
\[
u_{\eps_n}(x)\leq \phi(x) +\xi_n \quad \textup{in}\quad \Omega_E.
\]
Since $A_\eps$ is an average, the monotonicity given by  Lemma \ref{lem:mono} ensures that
\begin{equation*}
\begin{split}
0&=S(\eps_n,y_n, u_{\eps_n}(y_n),u_{\eps_{n}})\\
&=S(\eps_n,y_n, \phi(y_n)+\xi_n,u_{\eps_{n}})\\
&\geq S(\eps_n,y_n, \phi(y_n)+\xi_n,\phi+\xi_n).\\
\end{split}
\end{equation*}
As usual, we can assume that $\nabla \phi(y)\not=0$ for all $y$ close enough to $x_0$. Consistency given by Lemma \ref{lem:cons} then shows
\begin{equation*}
\begin{split}
0&\geq \liminf_{\eps_n\to0,\ y_n\to x_0, \ \xi_n\to0} S(\eps_n,y_n, \phi(y_n)+\xi_n,\phi+\xi_n)\\
&\geq \liminf_{\eps\to0,\ y\to x_0, \ \xi\to0} S(\eps,y, \phi(y)+\xi,\phi+\xi)\\
&\geq \left\{\begin{array}{cccl}
-\Delta_p^N\phi(x_0)& \text{ if } &x_0\in\Omega\\
\min\{-\Delta_p^N\phi(x_0), \phi(x_0)-G(x_0)\}& \text{ if } &x_0\in \partial{\Omega}.
\end{array}\right.
\end{split}
\end{equation*}
Since $\phi(x_0)=\vv(x_0)$ and $G(x_0)=g(x_0)$ if $x_0\in \partial \Omega$, we have concluded that 
\begin{align*}
 -\Delta_p^N\phi(x_0)&\leq0 \quad \text{ if } \quad  x_0\in \Omega,\\
\min\{-\Delta_p^N\phi(x_0), \vv(x_0)-g(x_0)\}&\leq0 \quad \text{ if } \quad  x_0\in\partial\Omega.
\end{align*}
Moreover, since $\nabla \phi(x_0)\not=0$ we have that $-\Delta_p^N\phi(x_0)\leq0$ if and only if $-\Delta_p\phi(x_0)\leq0$ and thus $\vv$ is a generalized viscosity $p$-subsolution of \eqref{dproblem}, and the proof is completed. 
\end{proof}

\subsection{Proof of convergence for Lipschitz domains} Next we prove Theorem \ref{thm:main2}, i.e.\ we present the proof of convergence for Lipschitz domains following the proof sketched in \cite{MPR12} in the case of $p$-harmonious functions. See Figure \ref{fig:convLipDom} below for a graphical explanation of the proof.

Since $\Omega$ is Lipschitz, it is clear that $\Omega$ satisfies the following regularity condition which is the one that we actually use in the proof:
\begin{equation*}
\begin{split}
&\text{There exists} \,  \bar{\delta}>0 \, \text{and} \,  \mu\in(0,1) \,  \text{such that for every} \,  \delta\in(0,\bar{\delta}) \,  \text{and} \,  y\in\partial\Omega \, \\
&\text{there exists a ball}  \, B_{\mu\delta}(z) \,  \text{strictly contained in} \, B_\delta(y)\setminus\Omega.
\end{split}
\end{equation*}
Set $\Omega_{\eps}=\Omega\cup\Gamma_\eps$.
Let $u_\eps$ be as in Theorem \ref{thm:main2}.
Fix $\delta\in(0,\bar{\delta})$.
For $y\in\partial\Omega$ consider: 
\begin{equation}\label{Beps}
m^\eps(y) :=\sup_{B_{5\delta}(y)\cap \Gamma_\eps}G  \quad
\text{and}\quad 
M^\eps :=\sup_{ \Gamma_\eps}G.
\end{equation}
Assume momentarily that $p\not= d$. 
Fix a  number $\theta\in (0,1)$ depending only on $\mu$, $d$ and $p$,  to be determined later. 
For $k\geq 0$ define $\delta_{k}=\delta/4^{k-1}$
and 
\begin{equation}\label{Meps}
M_k^\eps(y):=m^\eps(y)+\theta^k(M^\eps-m^\eps(y)).
\end{equation}
By the regularity assumption on $\Omega$, there exist balls $B_{\mu \delta_{k+1}}(z_k)$ contained in $B_{\delta_{k+1}}(y)\setminus \Omega$ for all $k\in\mathbb{N}$. Note that $\mu$ is independent of $k$ and $\delta$.\par
Next, we present the basic iteration lemma which follows. 
We construct smooth  barriers based on the fundamental solution on appropriate shrinking rings. We use the convergence result on smooth domains to get estimates in the Lipschitz case.
 The proof, see (\ref{eq:choice-theta}), will show that the right choice  for $\theta$ is
\begin{equation}
\label{eq:theta}
\theta = \frac{1-\frac{1}{2}\left(\frac{\mu}{2-\mu}\right)^{\xi}-\frac{1}{2}\left(\frac{\mu}{2}\right)^\xi}{1-\left(\frac{\mu}{2}\right)^\xi}\in(0,1), 
\end{equation}
where $\xi=\frac{d-p}{p-1}$.
\begin{lemma}\label{Lemma induction k}
Fix $\eta>0$ and let $y\in\partial\Omega$ and $\eps_k>0$. Under the above notations, suppose that for all $\eps<\eps_k$ we have:
\begin{equation*}
u_\eps \leq M_k^\eps(y) \quad \text{in}\quad B_{\delta_k}(y)\cap \Omega. 
\end{equation*}
Then, either $M_k^\eps(y)-m^\eps(y)\leq \frac{\eta}{4}$ or there exists $\eps_{k+1}=\eps_{k+1}(\eta, \mu,\delta,d,p,G)\in(0,\eps_k)$  such that:
\begin{equation*}
u_\eps \leq M_{k+1}^\eps(y) \quad \text{in}\quad B_{\delta_{k+1}}(y)\cap \Omega
\end{equation*}
for all $\eps\leq \eps_{k+1}$. 
\end{lemma}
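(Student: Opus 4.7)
The plan is to adapt the boundary-regularity barrier iteration of \cite{MPR12} to our abstract setting: use the exterior-ball regularity of $\partial\Omega$ to construct a radial $p$-harmonic barrier on a $C^2$ ring, and transfer the barrier estimate to $u_\eps$ via the smooth-domain convergence (Theorem \ref{thm:main}) and the comparison principle \eqref{as:comppr}.

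First I would fix the geometry. The regularity condition provides $z_k\in\R^d\setminus\Omega$ with $B_{\mu\delta_{k+1}}(z_k)\subset B_{\delta_{k+1}}(y)\setminus\Omega$, hence $|z_k-y|\le(1-\mu)\delta_{k+1}$. Work on the annular $C^2$ domain
$$A_k:=B_{2\delta_{k+1}}(z_k)\setminus\overline{B_{\mu\delta_{k+1}}(z_k)}$$
and consider the explicit radial $p$-harmonic function
$$h(x)=a+b\,|x-z_k|^{-\xi},\qquad \xi=\tfrac{d-p}{p-1},$$
with $a,b$ chosen so that $h\equiv M^\eps$ on the inner sphere and $h\equiv M_k^\eps(y)$ on the outer sphere (the borderline case $p=d$ is treated with a logarithmic analogue). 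These boundary values match the best available pointwise upper bounds on $u_\eps$: the outer sphere sits inside $B_{\delta_k}(y)$ since $2\delta_{k+1}+(1-\mu)\delta_{k+1}=(3-\mu)\delta_k/4<\delta_k$, so the inductive hypothesis gives $u_\eps\le M_k^\eps(y)$ there, while on the inner sphere (which lies in $\R^d\setminus\Omega$) the trivial global bound $u_\eps\le M^\eps$ from \eqref{as:stab2} suffices.

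Next, Theorem \ref{thm:main} applied to the $C^2$ ring $A_k$ with a continuous extension of $h|_{\partial A_k}$ as boundary data produces a DPP solution $w_\eps$ on $A_k$ converging uniformly to $h$ on $\overline{A_k}$. Given the prescribed $\eta$, I would choose $\eps_{k+1}\in(0,\eps_k)$ so that $\|w_\eps-h\|_{L^\infty(\overline{A_k})}<\eta/8$ for every $\eps<\eps_{k+1}$. The next step is to transfer this estimate from $w_\eps$ to $u_\eps$ via \eqref{as:comppr}: on $A_k\cap\Omega$ the restriction of $u_\eps$ is a subsolution of a localized DPP whose boundary data on the $\eps$-strip of $A_k\cap\Omega$ are pointwise dominated by the boundary data of $w_\eps+\eta/8$ (by the matching of boundary values discussed above), so the comparison principle yields $u_\eps\le h+\eta/4$ on $A_k\cap\Omega$, in particular on $B_{\delta_{k+1}}(y)\cap\Omega\subset A_k\cap\Omega$.

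Finally, any $x\in B_{\delta_{k+1}}(y)\cap\Omega$ satisfies $\mu\delta_{k+1}\le|x-z_k|\le(2-\mu)\delta_{k+1}$ (lower bound because $x\notin B_{\mu\delta_{k+1}}(z_k)$, upper because $|x-y|<\delta_{k+1}$ and $|z_k-y|\le(1-\mu)\delta_{k+1}$). Plugging these extreme radii into the explicit form of $h$ and rewriting the result as a convex combination of $M^\eps$ and $M_k^\eps(y)$ produces the dimensionless ratios $\mu/2$ and $\mu/(2-\mu)$, and the choice of $\theta$ in \eqref{eq:theta} is exactly what is needed to ensure $h(x)\le M_{k+1}^\eps(y)$ throughout this radial range. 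The dichotomy in the lemma is then transparent: either $M_k^\eps(y)-m^\eps(y)\le\eta/4$ and there is nothing to prove, or this gap is large enough to absorb the $\eta/4$ slack from the previous step and one recovers $u_\eps\le M_{k+1}^\eps(y)$. The main technical obstacle I foresee is the comparison step: one must carefully localize \eqref{as:comppr} to the sub-domain $A_k\cap\Omega$ (or equivalently patch $w_\eps$ into a bona fide supersolution of the DPP on $\Omega$ dominating $G$ on $O$), using only the monotonicity and affine invariance of $A_\eps$; once this is in place, the algebraic identification of $\theta$ in \eqref{eq:theta} is elementary.
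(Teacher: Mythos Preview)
Your overall strategy---radial $p$-harmonic barrier on a smooth ring, transferred to $u_\eps$ via Theorem~\ref{thm:main} and the comparison principle \eqref{as:comppr}---is the same as the paper's. But there is a fatal error in your choice of boundary values for the barrier $h$: you set $h\equiv M^\eps$ on the inner sphere, where $M^\eps=\sup_{\Gamma_\eps}G$ is the \emph{global} supremum. With this choice the radial $p$-harmonic function is monotone in $|x-z_k|$ between its two boundary values, so $h(x)\ge \min\{M^\eps,M_k^\eps(y)\}=M_k^\eps(y)$ throughout the ring. Since $M_{k+1}^\eps(y)<M_k^\eps(y)$, your claimed inequality ``$h(x)\le M_{k+1}^\eps(y)$ throughout this radial range'' is impossible; the bound you obtain, $u_\eps\le h+\eta/4$, is \emph{worse} than the inductive hypothesis and no improvement can be extracted.

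The correct inner boundary value is the \emph{local} quantity $m^\eps(y)=\sup_{B_{5\delta}(y)\cap\Gamma_\eps}G$. The observation you are missing is that the portion of the $\eps$-boundary of $A_k\cap\Omega$ that lies outside $\Omega$ sits in $\Gamma_\eps\cap B_{5\delta}(y)$, where $u_\eps=G\le m^\eps(y)$---a far sharper bound than the trivial $u_\eps\le M^\eps$ from \eqref{as:stab2}. With $h=m^\eps(y)$ at the inner sphere and $h=M_k^\eps(y)$ at the outer sphere, the barrier at radius $(2-\mu)\delta_{k+1}$ becomes a genuine convex combination of $m^\eps(y)$ and $M_k^\eps(y)$, and \emph{this} is what makes the coefficient of $(M^\eps-m^\eps(y))$ contract by the factor $\theta$. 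Note also that the paper does not compare $u_\eps$ directly with the ring DPP solution: because the outer-strip values of $u_\eps$ only satisfy $u_\eps\le M_k^\eps(y)$ while the barrier there equals a smaller convex combination $a\,M_k^\eps(y)+b\,m^\eps(y)$, the comparison is carried out for the affine transform $a\,u_\eps+b\,m^\eps(y)$ (still a DPP subsolution by the affine invariance of $A_\eps$). Once you correct the inner boundary value you will need an analogous device.
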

\begin{proof}
The idea of the proof is to iterate over annular domains  and use a comparison principle against a $p$-harmonic function with suitable boundary values. In this way, we get an upper bound for $u_{\eps}$ inside the annular domain that is smaller than the upper bound at the outer boundary of the annular domain we started with. On the next round, we take this improved upper bound as a boundary value at the outer boundary of the next smaller annular domain and repeat the argument to gain again the same multiplying factor $\theta$, and continuing in this way prove the result. The key estimates showing that we gain the uniform $\theta$ at each round are (\ref{improved0})--(\ref{eq:choice-theta}) below.

{\bf 1.}
For notational convenience, denote  $m=m^\eps(y)$, $M=M^\eps$ and $M_k=M_k^\eps(y)$.
Consider the barrier: 
\begin{equation*}
U_{k}(x):=\frac{a_{k}}{|x-z_{k}|^{\xi}}+b_{k}\quad \textup{where}\quad 
a_{k}=\frac{m-M_k}{1-(\mu/4)^\xi}(\mu\delta_{k+1})^\xi \quad \text{and} \quad b_{k}=\frac{M_k-(\mu/4)^\xi m}{1-(\mu/4)^\xi}.
\end{equation*}
Note that when $\xi\neq 0$ we have that $U_k$ is increasing in $|x-z_{k}|$, has non vanishing  gradient, and solves the problem
\begin{equation*}
\begin{cases}
\Delta_{p} U_k = 0 &\quad \text{in}\quad B_{\delta_k}(z_k)\setminus \overline{B}_{\mu\delta_{k+1}}(z_k) \\
U_k = m &\quad \text{on}\quad \partial B_{\mu\delta_{k+1}}(z_k)\\
U_k = M_k &\quad \text{on}\quad \partial B_{\delta_{k}}(z_k).
\end{cases}
\end{equation*}
In the case $p=d$, we use $$U_k(x)=a_k\log(|x-z_k|)+b_k$$
with suitable coefficients $a_k$ and $b_k$. We give details only in the case $p\not=d$.\par
We will establish several upper bounds for $\eps_{k+1}$, and   take $\eps_{k+1}$ to be the minimum of such bounds. 
First, let $\eps_{k+1}=\frac{\mu\delta_{k+1}}{2}$.
For $\eps\leq \eps_{k+1}$, extend the barrier $U_k$ to the ring $$R_{k,\eps}= B_{\delta_k+2\eps}(z_k)\setminus \overline{B}_{\mu\delta_{k+1}-2\eps}(z_k).$$
Let $U^\eps_k$ be the solution of the the dynamic programming principle \eqref{eq:expDPP1} in the ring  $R_{k}=B_{\delta_k}(z_k)\setminus\overline{B}_{\mu\delta_{k+1}}(z_k)$ with boundary value $U_k$ on $R_{k,\eps}\setminus R_{k}$, the outer $\eps$-neighbourhood of $R_{k}$. 
Since $R_{k}$ is a smooth domain, by 
Theorem \ref{thm:main} we have that $U^\eps_k$ converges to $U_k$ uniformly in $R_{k, \epsilon}$ as $\eps\to 0$. 
Hence, given $$\gamma=\gamma(k,\mu, p, G,\eta)=\frac{1}{4}\frac{(\mu/(\mu-2))^\xi-(\mu/2)^\xi}{1-(\mu/4)^\xi}\frac{\eta}{4}>0,$$
there exists $\eps_{k+1}=\eps_{k+1}(\gamma)>0$ such that
\begin{align}
\label{eq:approx-rings}
|U^\eps_k-U_k|\leq \gamma
\end{align}
for $\eps\leq \eps_{k+1}$ and for every $x\in R_{k,\eps}$.
\medskip

{\bf 2.}
We define
\begin{equation*}
a= \frac{1-(\mu/2)^{\xi}}{1-(\mu/4)^{\xi}} \quad \text{and}\quad 
b =\frac{(\mu/2)^{\xi}-(\mu/4)^{\xi}}{1-(\mu/4)^{\xi}},
\end{equation*}
and note that $a+b=1$. 

Next, we prove the following claim
\begin{equation}\label{claim}
a u_\eps +b m\leq U_k+2\gamma \quad \text{in}\quad B_{\delta_k/2}(z_k)\cap\Omega,
\end{equation}
 for $\eps\leq \eps_{k+1}$.
We will use the comparison principle \eqref{as:comppr} in the $\eps$-neighbourhood of $B_{\delta_k/2}(z_k)\cap\Omega$ whose $\eps$-boundary is contained in  
$\Gamma_1^\eps\cup \Gamma_2^\eps$
where  $$\Gamma_1^\eps:=B_{\delta_k/2+\eps}(z_k)\cap\Gamma_\eps \quad \text{and} \quad \Gamma_2^\eps:=(B_{\delta_k/2+\eps}(z_k)\setminus \overline{B}_{\delta_k/2}(z_k))\cap\Omega,$$
see Figure \ref{fig:convLipDom}. \par
On $\Gamma_1^\eps$, we have $u_\eps=G\leq m$, and hence on $\Gamma_1^\eps$
$$a  u_\eps+b m\leq m=\inf_{R_k} U_k\leq U_k\leq U_k^\eps+\gamma,$$
where at the last step we used (\ref{eq:approx-rings}) and the fact $\Gamma_1^\eps\subset R_{k,\eps}$.\par
On $\Gamma_2^\eps$, we have $u_\eps\leq M_k$ by assumption, because $B_{\delta_k/2+\eps}(z_k)\subset B_{\delta_k}(y)$. 
For $x\in\partial B_{\delta_k/2}(z_k)$, we have $|x-z_{k}|=\delta_k/2$, and hence
\begin{equation}
\begin{split}
U_k(x)= a_k (\delta_k/2)^{-\xi}+b_k
=\frac{m-M_k}{1-(\mu/4)^{\xi}}(\mu/2)^{\xi}+\frac{M_k-(\mu/4)^{\xi}m}{1-(\mu/4)^{\xi}}
=a M_k+ b m,
\end{split}
\end{equation}
and by monotonicity of $U_k$ we get 
$U_k\geq a M_k+ b m$ in $\Gamma_2^\eps$.
Thus
\begin{equation*}\label{boundary2}
a u_\eps+ b m\leq a M_k+b m\leq U_k\leq U_k^\eps+\gamma
\end{equation*}
in $\Gamma_2^\eps$.
In conclusion, we have
$$
a u_\eps+b m\leq U_k^\eps+\gamma \quad\text{in}\quad \Gamma_1^\eps\cup \Gamma_2^\eps,
$$
and the claim in (\ref{claim}) follows by the condition \eqref{as:comppr} and by the fact that $\eps$-boundary of $B_{\delta_k/2}(z_k)\cap\Omega$ is contained in $\Gamma_1^\eps\cup \Gamma_2^\eps$.

\medskip

{\bf 3.}
Now consider the intersection  $B_{\delta_{k+1}}(y)\cap\Omega$. We have $B_{\delta_{k+1}}(y) \subset B_{(2-\mu)\delta_{k+1}}(z_k)$ and for $x\in B_{(2-\mu)\delta_{k+1}}(z_k)$ we have:
\begin{equation}\label{improved0}
\begin{split}
U_k(x)&\leq\frac{m-M_k}{1-(\mu/4)^{\xi}}(\mu\delta_{k+1})^{\xi}((2-\mu)\delta_{k+1})^{-\xi}+ \frac{M_k-(\mu/4)^{\xi}}{1-(\mu/4)^{\xi}}\\
&=b'm+a'M_k,
\end{split}
\end{equation}
where
$$ a'= \frac{1-(\mu/(2-\mu))^\xi}{1-(\mu/4)^\xi} \quad \text{and}\quad  b'=\frac{(\mu/(2-\mu))^\xi-(\mu/4)^\xi}{1-(\mu/4)^\xi}.$$
Also, note that 
$B_{\delta_{k+1}}(y)\subset B_{\delta_k/2}(z_k)$,
hence by \eqref{claim} we get
\begin{equation}\label{improved}
a u_\eps+b m\leq U_k+2\gamma \quad\text{in}\quad B_{\delta_{k+1}}(y)\cap\Omega.
\end{equation}  
Combining \eqref{improved0} and \eqref{improved}, for $x\in B_{\delta_{k+1}}(y)\cap\Omega$ and $\eps<\eps_{k+1}$, we get
\begin{equation}
\begin{split}
\label{eq:choice-theta}
u_\eps(x)&\leq 
\frac{b'- b}{a}m+\frac{a'}{a}M_k+\frac{2\gamma}{a}
\le  m+\frac{a'}{a}(M_k-m)+\frac{b' (M_k-m)}{2 a}\\
&=m+\theta(M_k-m)=m+\theta^{k+1}(M-m),
\end{split}
\end{equation}
because $\gamma\leq \frac{1}{4}\frac{(\mu/(\mu-2))^\xi-(\mu/2)^\xi}{1-(\mu/4)^\xi}(M_k-m)$ holds.
\end{proof}

\begin{figure}[h!]
\centering
\includegraphics[width=0.95\textwidth]{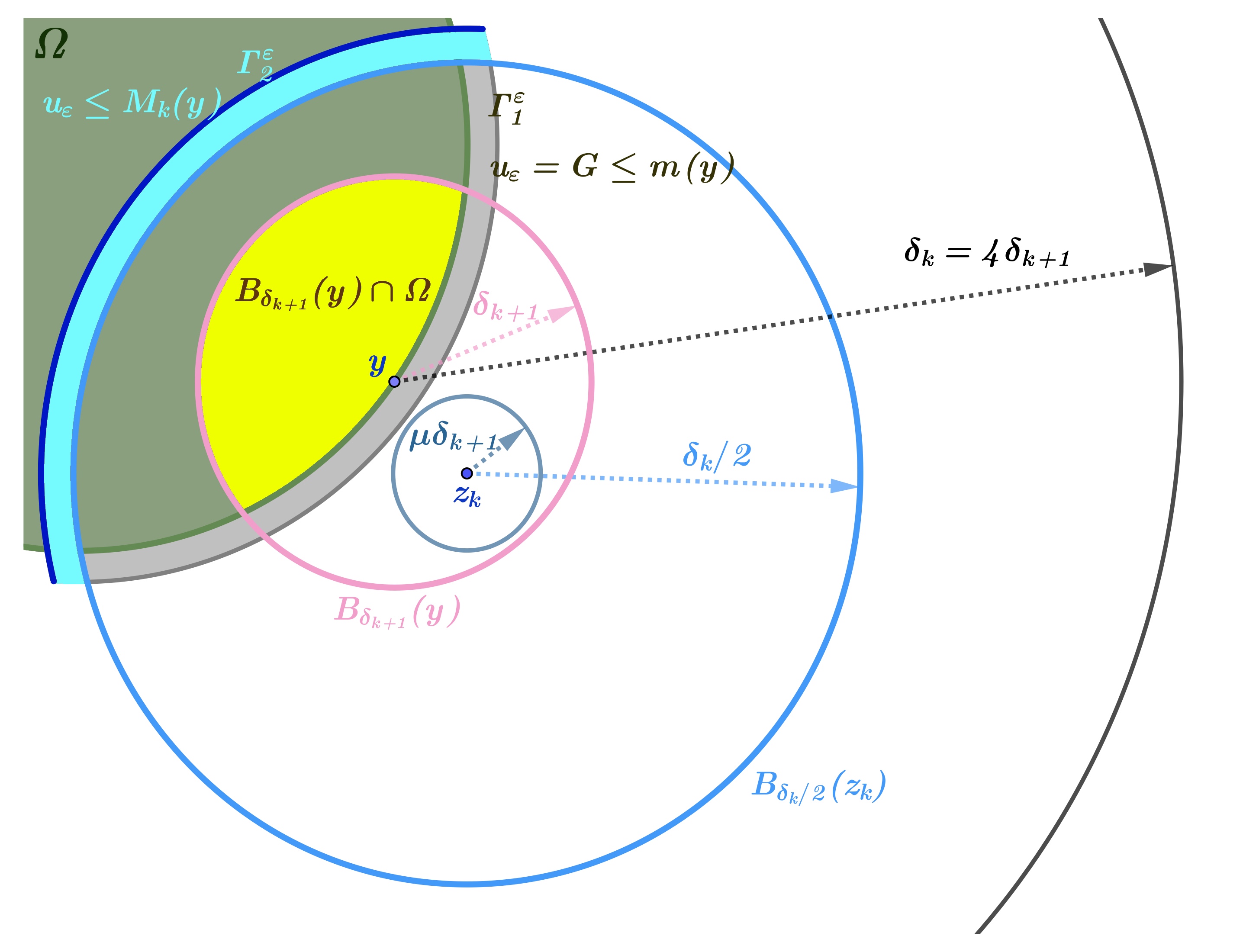}
\caption{Graphical explanation of the proof of Theorem \ref{thm:main2}}
\label{fig:convLipDom}
\end{figure}

Next, we write down the explicit results of the iteration that follow in a standard way.

\begin{corollary}\label{TheoremBoundary}
Given $\eta>0$, there exist $\delta=\delta(\eta, G, \bar{\delta})$, $k_0=k_0(\eta, \mu, p, G)$, $\eps_0=\eps_0(\eta, \delta,\mu, k_0)$ such that
\begin{equation*}
|u_\eps(x)-G(y)|\leq \frac{\eta}{2},
\end{equation*}
for all $y\in\partial\Omega$, $x\in B_{\delta/4^{k_0}}(y)\cap\Omega$ and $\eps\leq\eps_0$.
\end{corollary}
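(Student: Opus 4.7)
The plan is to iterate Lemma~\ref{Lemma induction k} a controlled finite number of times to produce a one-sided estimate $u_\eps(x) \leq G(y) + \eta/2$ near each boundary point, and then to run the symmetric argument to obtain the matching lower bound $u_\eps(x) \geq G(y) - \eta/2$. The two inputs that make the iteration quantitative are the uniform continuity of the boundary datum and the fact that the contraction factor $\theta$ in~\eqref{eq:theta} is a fixed constant strictly less than $1$, independent of $k$ and $\eps$.

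First, I would use that the continuous extension $G$ is uniformly continuous on the compact set $\overline{\Omega_E}$ to pick $\delta = \delta(\eta, G, \bar\delta) \in (0, \bar\delta)$ so small that $|G(z) - G(y)| \leq \eta/4$ whenever $z \in B_{5\delta}(y)$ and $y \in \partial\Omega$; this forces $|m^\eps(y) - G(y)| \leq \eta/4$ for every small $\eps$. Then, since $M^\eps - m^\eps(y) \leq 2\|G\|_{L^\infty(O)} =: R$, choose $k_0 = k_0(\eta,\mu,p,G)$ so that $\theta^{k_0} R \leq \eta/4$. The base of the induction is free: by assumption~\eqref{as:stab2}, $u_\eps \leq \sup_O G = M^\eps = M_0^\eps(y)$ throughout $\Omega \supset B_{\delta_0}(y)\cap\Omega$, so the hypothesis of Lemma~\ref{Lemma induction k} holds at $k = 0$. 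Applying the lemma $k_0$ times in succession, at each step either the first alternative $M_k^\eps(y) - m^\eps(y) \leq \eta/4$ triggers (in which case the desired bound is already in hand) or the second alternative produces $\eps_{k+1} > 0$ propagating the upper bound to $B_{\delta_{k+1}}(y) \cap \Omega$. Setting $\eps_0 := \eps_{k_0}$, for every $\eps \leq \eps_0$ and $x \in B_{\delta_{k_0+1}}(y) \cap \Omega = B_{\delta/4^{k_0}}(y) \cap \Omega$,
\[
u_\eps(x) - G(y) \leq M_{k_0}^\eps(y) - G(y) \leq \bigl(m^\eps(y) - G(y)\bigr) + \theta^{k_0}\bigl(M^\eps - m^\eps(y)\bigr) \leq \tfrac{\eta}{4} + \tfrac{\eta}{4} = \tfrac{\eta}{2}.
\]

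For the matching lower bound I would run the dual iteration: replace $m^\eps(y), M^\eps$ by $\widetilde m^\eps(y) := \inf_{B_{5\delta}(y) \cap \Gamma_\eps} G$ and $\widetilde M^\eps := \inf_{\Gamma_\eps} G$, use the decreasing barrier $\widetilde U_k(x) = \widetilde b_k - \widetilde a_k |x - z_k|^{-\xi}$ (still $p$-harmonic on the ring and with non-vanishing gradient), and invoke the comparison principle~\eqref{as:comppr} in the reverse direction together with the lower stability $u_\eps \geq \inf_O G$ from~\eqref{as:stab2} for the base case. The resulting mirror version of Lemma~\ref{Lemma induction k} yields $u_\eps(x) \geq G(y) - \eta/2$ under the same choice of $\delta, k_0$, with some $\widetilde \eps_0 > 0$; replacing $\eps_0$ by $\min\{\eps_0, \widetilde \eps_0\}$ gives the two-sided estimate claimed. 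The main obstacle is essentially bookkeeping: one must verify that the radii $\delta_{k+1}$ and the thresholds $\eps_{k+1}$ produced at each induction step stay compatible with the hypotheses of the next application of the lemma, and that the choice of $\theta$ in~\eqref{eq:theta} — crucially uniform in $k$ and $\eps$ — guarantees that only the finitely many iterations $k_0 = \lceil \log(R/4\eta) / \log(1/\theta) \rceil$ are required before the geometric decay $\theta^{k_0} R$ is dominated by $\eta/4$.
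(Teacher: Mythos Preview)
Your proposal is correct and follows essentially the same route the paper sketches: the paper states that the corollary ``follows in a standard way'' by iterating Lemma~\ref{Lemma induction k}, and even records the same choice $k_0=[\log_{\theta}(\eta/(4\,\osc_{O} G))]+1$ that your $\theta^{k_0}R\le \eta/4$ condition encodes. The only cosmetic slip is the identification $\sup_O G = M^\eps$ in your base case (by definition $M^\eps=\sup_{\Gamma_\eps}G\le \sup_O G$), but this is harmless since you can run the iteration with $\sup_O G$ in place of $M^\eps$ without any change to Lemma~\ref{Lemma induction k} or its proof.
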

For example, if  $\osc_{O} G=\sup_{O}{G}-\inf_{O}{G}$ we can take
$$k_0=\left[\log_{\theta}\left(\frac{\eta}{4 \osc_{O} G}\right)\right]+1,$$
where $[\,\cdot\,]$ denotes the integer part function and $\theta$ is as in (\ref{eq:theta}).

It follows now that $\vv$ and $\underline{v}$ defined in \eqref{barles} satisfy
\begin{equation}\label{boundarycomparison}
\limsup_{\Omega \ni x\to y}\vv(x) \le G(y)\le \liminf_{\Omega\ni x\to y}\underline{v}(x)
\end{equation}
for all $y\in\partial\Omega$. By  the same argument as in the proof of Theorem \ref{thm:main} we  conclude that 
$\vv$ is a viscosity $p$-subsolution and $\underline{v}$ is a viscosity $p$-supersolution, since \eqref{boundarycomparison} ensures that the boundary condition is taken in the usual sense. Again, by construction $\overline{v}\leq \underline{v}$, and by the comparison principle for viscosity solutions given by Theorem \ref{thm:clasicalcompprinc} we have  that $\underline{v}\leq \overline{v}$. Thus, we get  $\underline{v}=\overline{v}=\lim_{\eps \to 0} u_\eps$.

\section{Dynamic programming principles with continuous solutions}\label{sec:contDPP}
Another family of dynamic programming principles can be formulated as
\begin{equation}\label{eq:DPPgen}
u_\eps(x)=\delta^\eps(x) A_\eps [u_\eps] (x) +(1-\delta^\eps(x)) G(x) \quad \textup{for} \quad x\in \Omega_E
\end{equation}
where
\begin{equation}\label{eq:w2}
\delta^\eps(x)= \left\{
\begin{array}{cccl}
1 &\text{ if }&x\in \Omega\setminus I_\eps\\
\displaystyle\frac{1}{\eps}d(x,\partial\Omega)&\text{ if }&x\in I_\eps\\
0 & \text{ if }&x\in O.
\end{array}
\right.
\end{equation}
The most interesting property of the dynamic programming principle \eqref{eq:DPPgen} is that in some cases they are known to produce continuous solutions (cf. \cite{LPS14, Har16}). 
\begin{remark}
Note that we can recover the dynamic programming principle \eqref{eq:expDPP1} from \eqref{eq:DPPgen} by choosing
 \begin{equation*}
\delta^\eps(x)= \left\{
\begin{array}{cccl}
1 &\text{ in }& \Omega\\
0 & \text{ in }& O.
\end{array}
\right. 
\end{equation*}
\end{remark}
For clarity of the presentation we restate here the assumptions \eqref{as:stab}, \eqref{as:stab} and \eqref{as:comppr} in the context of  the dynamic programming principle \eqref{eq:DPPgen} as well as the corresponding main results.
\begin{equation}\label{as:Hstab}
\begin{split}
&\textup{For all $ \eps>0$ there exists $ u_\eps \in B(\Omega_E)$ solution of \eqref{eq:DPPgen}
with a}\\ &\textup{bound on $\|u_\eps\|_{L^\infty(\Omega_E)}$ uniform in $\eps$.}
\end{split}
\end{equation}
\begin{equation}\label{as:Hstab2}
\begin{split}
&\textup{For all $ \eps>0$ there exists $ u_\eps \in B(\Omega_E)$ solution of \eqref{eq:DPPgen},  and }\\
&\inf_{O} G \leq u_\eps(x) \leq \sup_{O}{G} \quad \textup{for all}\quad x\in \Omega.
\end{split}
\end{equation}
\begin{equation}\label{as:Hcomppr}
\begin{split}
&\textup{Let $u^1_\eps$ and $u^2_\eps$ be a subsolution and a supersolution of \eqref{eq:DPPgen} with boundary}\\ &\textup{data $G_1$ and $G_2$ respectively. If $G^1\leq G^2$ on $O$ then $u_\eps^1\leq u_\eps^2$ in $\Omega_E$.} 
\end{split}
\end{equation}
\begin{theorem}\label{thm:main3}
Assume $p\in(1,\infty]$, $\Omega\subset \R^d$ be a bounded $C^2$-domain and $g\in C(\partial\Omega)$. Let $\{A_\eps\}_{\eps>0}$ be a mean value property for the $p$-Laplacian.  Let also $\{u_\eps\}_{\eps>0}$ be a sequence of solutions of the corresponding \eqref{eq:DPPgen}  satisfying  the assumption \eqref{as:Hstab}. Then we have that 
\[
\textup{$u_\eps\to v$  uniformly in $\overline{\Omega}$ as $\eps\to0$,  where $v$ is the unique viscosity solution of \eqref{dproblem}.}
\]
\end{theorem}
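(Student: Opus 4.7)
The plan is to adapt the strategy used in the proof of Theorem \ref{thm:main}, modifying the consistency step to accommodate the interpolation factor $\delta^\eps$ present in \eqref{eq:DPPgen}. Introduce the half-relaxed limits
\[
\vv(x) := \limsup_{\eps\to 0,\, y\to x} u_\eps(y), \qquad \underline{v}(x) := \liminf_{\eps\to 0,\, y\to x} u_\eps(y),
\]
which are bounded on $\overline{\Omega}$ by \eqref{as:Hstab} and satisfy $\underline{v}\le \vv$, with $\vv$ USC and $\underline{v}$ LSC. I will show that $\vv$ is a generalized viscosity $p$-subsolution and $\underline{v}$ is a generalized viscosity $p$-supersolution of \eqref{dproblem}; Proposition \ref{prop:SUP} then forces $\vv\le\underline{v}$, so $v:=\vv=\underline{v}$ is a generalized viscosity solution, and by Theorem \ref{thm:equivSol} it is the unique classical viscosity solution of \eqref{dproblem}. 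The coincidence of the two half-relaxed limits with a single continuous function on the compact set $\overline{\Omega}$ then yields the uniform convergence $u_\eps\to v$.

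To prove that $\vv$ is a generalized viscosity $p$-subsolution, let $\phi\in C^2_b(\overline{\Omega})$ touch $\vv$ from above at a strict global maximum $x_0\in\overline{\Omega}$ with $\phi(x_0)=\vv(x_0)$ and $\nabla\phi(x_0)\ne 0$. A standard extraction produces sequences $\eps_n\to 0$ and $y_n\to x_0$ with $y_n$ a global maximum of $u_{\eps_n}-\phi$ on $\Omega_E$ and $\xi_n := u_{\eps_n}(y_n)-\phi(y_n)\to 0$, so that $u_{\eps_n}\le\phi+\xi_n$ on $\Omega_E$. Combining the monotonicity and affine invariance of $A_{\eps_n}$ with the DPP \eqref{eq:DPPgen} applied at $y_n\in\Omega$, and then inserting the mean value property \eqref{eq:MVP}, yields the key inequality
\[
\delta^{\eps_n}(y_n)\,c_{p,d}\,\eps_n^{2}\bigl(-\Delta_p^N\phi(y_n)\bigr) + \bigl(1-\delta^{\eps_n}(y_n)\bigr)\bigl(\phi(y_n)-G(y_n)+\xi_n\bigr)\le o(\eps_n^{2}).
\]

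If $x_0\in\Omega$, then $\delta^{\eps_n}(y_n)=1$ for $n$ large, the boundary term vanishes, and dividing by $c_{p,d}\eps_n^{2}$ gives $-\Delta_p^N\phi(x_0)\le 0$. If $x_0\in\partial\Omega$ and $y_n\in O$ along a subsequence, then $u_{\eps_n}(y_n)=G(y_n)$ forces $\phi(x_0)=G(x_0)$, so the generalized minimum vanishes. In the remaining situation $x_0\in\partial\Omega$ and $y_n\in\Omega$, pass to a subsequence with $\delta^{\eps_n}(y_n)\to\alpha\in[0,1]$: when $\alpha=0$, the first term of the displayed inequality is $o(1)$ while the second converges to $\phi(x_0)-G(x_0)$, forcing $\phi(x_0)-G(x_0)\le 0$; when $\alpha>0$ and $\phi(x_0)\ge G(x_0)$, dividing by $c_{p,d}\eps_n^{2}\delta^{\eps_n}(y_n)$ and discarding the non-negative second term yields $-\Delta_p^N\phi(x_0)\le 0$; when $\alpha>0$ and $\phi(x_0)<G(x_0)$, the generalized minimum is already negative. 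Since $\nabla\phi(x_0)\ne 0$, the signs of $-\Delta_p^N\phi$ and $-\Delta_p\phi$ coincide, so in every case $\min\{-\Delta_p\phi(x_0),\phi(x_0)-G(x_0)\}\le 0$. The supersolution property for $\underline{v}$ follows symmetrically. The main obstacle is precisely this analysis inside the interpolation strip $I_{\eps_n}$: unlike \eqref{eq:expDPP1}, the DPP \eqref{eq:DPPgen} mixes the interior mean value equation and the boundary datum at incompatible scales in $\eps_n$, so the clean consistency Lemma \ref{lem:cons} is no longer directly applicable, and the trichotomy according to the limit $\alpha$ of $\delta^{\eps_n}(y_n)$ is what selects which of the two alternatives in the generalized subsolution definition ends up being satisfied at the boundary.
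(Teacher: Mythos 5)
Your proposal is correct and follows essentially the same Barles--Souganidis strategy as the paper: half-relaxed limits, monotonicity via the average property, and a modified consistency analysis at the boundary to handle the interpolation factor $\delta^\eps$, followed by Proposition \ref{prop:SUP} and Theorem \ref{thm:equivSol}. The only stylistic difference is that you carry out the consistency estimate inline through a case analysis on the limit $\alpha$ of $\delta^{\eps_n}(y_n)$, whereas the paper packages the same content into a standalone consistency lemma (Lemma \ref{lem:cons2}) for the scheme \eqref{eq:DPPscheme2}; the two accounts are equivalent.
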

\begin{theorem}\label{thm:main4}
Let $p\in(1,\infty]$, $\Omega\subset \R^d$ be a bounded Lipschitz domain and $g\in C(\partial\Omega)$. Let $\{A_\eps\}_{\eps>0}$ be a mean value property for the $p$-Laplacian.  Let also $\{u_\eps\}_{\eps>0}$ be a sequence of solutions of the corresponding \eqref{eq:DPPgen}  satisfying  the assumptions \eqref{as:Hstab2} and \eqref{as:Hcomppr}. Then we have that 
\[
\textup{$u_\eps\to v$  uniformly in $\overline{\Omega}$ as $\eps\to0$,  where $v$ is the unique viscosity solution of \eqref{dproblem}.}
\]
\end{theorem}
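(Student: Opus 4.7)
My plan is to mirror the proof of Theorem \ref{thm:main2}, substituting Theorem \ref{thm:main3} for Theorem \ref{thm:main} and the comparison principle \eqref{as:Hcomppr} for \eqref{as:comppr}. Define the half-relaxed limits $\bar v$ and $\underline v$ as in \eqref{barles}; they are bounded thanks to \eqref{as:Hstab2}. It suffices to show (i) $\bar v$ and $\underline v$ are a viscosity $p$-subsolution and $p$-supersolution of $-\Delta_p u = 0$ in $\Omega$, and (ii) the boundary estimate \eqref{boundarycomparison}; with these, Theorem \ref{thm:clasicalcompprinc} forces $\bar v = \underline v$ and yields uniform convergence to the unique viscosity solution of \eqref{dproblem}. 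Part (i) is purely an interior matter: for any interior test point $x_0 \in \Omega$, one has $\delta^\eps \equiv 1$ in a fixed neighborhood of $x_0$ once $\eps$ is small enough, so \eqref{eq:DPPgen} locally reduces to \eqref{eq:expDPP1}, and the same stability/monotonicity/consistency argument underlying the proof of Theorem \ref{thm:main3} gives the required viscosity inequalities.

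For (ii), I would rerun the barrier iteration Lemma \ref{Lemma induction k} almost verbatim. Fix $y \in \partial\Omega$, take the exterior balls $B_{\mu\delta_{k+1}}(z_k) \subset B_{\delta_{k+1}}(y)\setminus\Omega$ granted by Lipschitz regularity, and construct the $p$-harmonic ring barriers $U_k$ on the smooth rings $R_k = B_{\delta_k}(z_k) \setminus \overline{B}_{\mu\delta_{k+1}}(z_k)$, with $\theta$ from \eqref{eq:theta}. Let $U_k^\eps$ now solve the DPP \eqref{eq:DPPgen} on $R_k$ with boundary data $U_k$ on the outer $\eps$-strip; since $R_k$ is a $C^2$ domain, Theorem \ref{thm:main3} yields the uniform convergence $U_k^\eps \to U_k$, reproducing \eqref{eq:approx-rings}. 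The comparison step then applies \eqref{as:Hcomppr} on the $\eps$-neighborhood of $B_{\delta_k/2}(z_k) \cap \Omega$, splitting the boundary into $\Gamma_1^\eps \cup \Gamma_2^\eps$ exactly as in the original proof, to obtain $au_\eps + bm \le U_k^\eps + \gamma$ in $B_{\delta_k/2}(z_k)\cap\Omega$ and hence the geometric decay $M_k^\eps - m^\eps \le \theta^k(M^\eps - m^\eps)$; this produces Corollary \ref{TheoremBoundary} in the present setting, and in turn \eqref{boundarycomparison}.

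The main obstacle I anticipate is verifying that the affine combination $au_\eps + bm$ is a legitimate subsolution of \eqref{eq:DPPgen} eligible for \eqref{as:Hcomppr}. Unlike \eqref{eq:expDPP1}, the DPP \eqref{eq:DPPgen} carries an inhomogeneous term $(1-\delta^\eps)G$ on $I_\eps$ that does not vanish under affine rescaling of the unknown. The saving observation is that $w \mapsto \delta^\eps A_\eps[w] + (1-\delta^\eps) G$ is affine in $w$ for fixed data $G$, so $au_\eps + bm$ is a subsolution of \eqref{eq:DPPgen} with $G$ replaced by $aG + bm$ on $O$; on $\Gamma_1^\eps$ the pointwise bound $u_\eps = G \le m$ yields $aG + bm \le m \le U_k^\eps + \gamma$ via \eqref{eq:approx-rings} and the inner boundary condition $U_k = m$, while on $\Gamma_2^\eps$ the inductive hypothesis $u_\eps \le M_k$ and the monotonicity of $U_k$ in $|x-z_k|$ give $aM_k + bm \le U_k \le U_k^\eps + \gamma$. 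With this book-keeping done, \eqref{as:Hcomppr} closes the comparison step and the iteration proceeds unchanged, so combined with (i) and Theorem \ref{thm:clasicalcompprinc} the proof is complete.
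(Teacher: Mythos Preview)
Your proposal is correct and follows essentially the same approach as the paper, which simply states that the proof of Theorem~\ref{thm:main4} proceeds exactly as that of Theorem~\ref{thm:main2} once monotonicity and consistency for the scheme associated to \eqref{eq:DPPgen} are established (Lemmas~\ref{lem:mono2} and~\ref{lem:cons2}). Your additional discussion of why the affine combination $au_\eps+bm$ remains a subsolution of \eqref{eq:DPPgen} with modified data $aG+bm$ is a legitimate point that the paper glosses over, and your resolution via the affine-in-$w$ structure of $w\mapsto \delta^\eps A_\eps[w]+(1-\delta^\eps)G$ is exactly the right observation.
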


At this point, it is clear that the outlines of the proof of Theorem \ref{thm:main3} and Theorem \ref{thm:main4} are exactly as in Theorem \ref{thm:main} and Theorem \ref{thm:main2} and we skip the unnecessary details. The only nontrivial adaptation is to show that the scheme associated to \eqref{eq:DPPgen} is monotone and stable. 

More precisely,  the dynamic programming principle  \eqref{eq:DPPgen} can be written as
\begin{equation}
\left\{
\begin{array}{cccl}
u_\eps(x) & = & A_\eps [u_\eps] (x) & \text{ in } \Omega\setminus I_\eps\\
u_\eps(x) & = & \delta^\eps (x)A_\eps [u_\eps] (x)  + (1-\delta^\eps(x))G(x)& \text{ in }  I_\eps\\
u_\eps(x)&=& G(x) & \text{ on } {O}.
\end{array}
\right.
\end{equation}
which leads to the following scheme defined for smooth functions as
\begin{equation}\label{eq:DPPscheme2}
S(\eps, x, \phi(x), \phi)=\left\{\begin{array}{cccl}
\frac{1}{c_{p,d} \eps^2}\left(\phi(x)-A_\eps[\phi](x)\right)& \text{ if } &x\in\Omega\\
\phi(x) - \delta^\eps (x)A_\eps [\phi] (x)  - (1-\delta^\eps(x))G(x)& \text{ if } &x\in  I_\eps\\
\phi(x)-G(x) & \text{ if } &x\in {O}
\end{array}\right.
\end{equation}

Then,  the dynamic programming principle \eqref{eq:DPPgen} can be formulated as
\begin{equation}\label{eq:DPP4}
S(\eps, x, u_\eps(x), u_\eps)=0 \quad \textup{for all} \quad x\in \Omega_E.
\end{equation}

We have the following monotonicity result:
\begin{lemma}\label{lem:mono2}
Let $\Omega \subset \R^d$ be a bounded domain and $A_\eps$ be an average. Then $S$ defined by \eqref{eq:DPPscheme2} is monotone, that is,
for all $\eps>0$, $x\in \Omega_E$,  $t\in \R$ and $u,v\in B(\Omega_E)$ such that $u\leq v$ we have that 
\[
S(\eps, x, t, v)\leq S(\eps, x, t, u).
\]
\end{lemma}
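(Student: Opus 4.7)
The plan is to reduce to the three cases that appear in the piecewise definition of $S$ in \eqref{eq:DPPscheme2}, exactly paralleling the proof of Lemma \ref{lem:mono}. The single ingredient driving the whole argument is the monotonicity of the average operator: since $A_\eps$ is an average, $u \leq v$ on $\Omega_E$ immediately yields $A_\eps[u](x) \leq A_\eps[v](x)$ for every $x \in \Omega_E$. Once this is noted, one just plugs into the three branches of \eqref{eq:DPPscheme2} and checks that the dependence on the last slot is order-reversing.

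For $x \in \Omega$ (outside $I_\eps$, where the first branch is active), the expression is $\frac{1}{c_{p,d}\eps^2}(t - A_\eps[\cdot](x))$, and since we are subtracting a larger quantity on the right, $S(\eps,x,t,v) \leq S(\eps,x,t,u)$. This is literally the computation in Lemma \ref{lem:mono}. For $x \in O$, the formula $t - G(x)$ does not involve the test function at all, so both sides are equal and the inequality holds trivially.

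The only genuinely new case is $x \in I_\eps$, where
\[
S(\eps,x,t,\phi) \;=\; t \;-\; \delta^\eps(x)\,A_\eps[\phi](x) \;-\; (1-\delta^\eps(x))\,G(x).
\]
Here the second term is the place where $\phi$ enters, and the third term is independent of $\phi$. Since $\delta^\eps(x) \in [0,1]$ by the definition \eqref{eq:w2}, in particular $\delta^\eps(x) \geq 0$, so multiplying $A_\eps[u](x) \leq A_\eps[v](x)$ by $\delta^\eps(x)$ preserves the direction of the inequality, and hence $-\delta^\eps(x) A_\eps[v](x) \leq -\delta^\eps(x) A_\eps[u](x)$. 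Adding the common terms $t$ and $-(1-\delta^\eps(x))G(x)$ to both sides gives $S(\eps,x,t,v) \leq S(\eps,x,t,u)$, as required.

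There is no real obstacle; the only substantive point beyond Lemma \ref{lem:mono} is the nonnegativity $\delta^\eps(x) \geq 0$, which is visibly built into \eqref{eq:w2}. The role of $G$ in the $I_\eps$ branch is benign because it is a boundary datum that does not depend on the functions being compared.
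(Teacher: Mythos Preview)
Your proof is correct and follows essentially the same route as the paper's own argument: both reduce the cases $x\in\Omega\setminus I_\eps$ and $x\in O$ to Lemma~\ref{lem:mono}, and for $x\in I_\eps$ both use the monotonicity of $A_\eps$ together with $\delta^\eps(x)\geq 0$ to handle the term $-\delta^\eps(x)A_\eps[\cdot](x)$. Your observation that only the nonnegativity of $\delta^\eps$ (rather than the full bound $0\le\delta^\eps\le 1$) is needed here is accurate.
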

\begin{proof}
If $x\in \Omega\setminus \I_\eps$ or $x\in O$, everything is as in the proof proof Lemma \ref{lem:mono} for the scheme \eqref{eq:DPPscheme}. On the other hand,  since $0\leq \delta^\eps\leq1$,  if $x\in I_\eps$, then we have 
\begin{equation*}
\begin{split}
S(\eps, x, t, v)&=t - \delta^\eps (x)A_\eps [v] (x)  - (1-\delta^\eps(x))G(x)\\
&\leq t  - \delta^\eps (x)A_\eps [u] (x)  - (1-\delta^\eps(x))G(x) \leq S(\eps, x, t, u).\qedhere
\end{split}
\end{equation*}
\end{proof}
For consistency, we have to work a little bit more, but the result remains the same:

\begin{lemma}\label{lem:cons2}
Let $\{A_\eps\}_{\eps>0}$ be a  mean value property for the $p$-Laplacian and let $S$ be given by \eqref{eq:DPPscheme2}. Then, for all $x\in \overline{\Omega}$ and $\phi\in C_b^\infty(\Omega_{E})$ such that $\nabla\phi(x)\not=0$, we have that
\[
\limsup_{\eps\to0,\ y\to x,\ \xi \to 0} S(\eps, y, \phi(y)+\xi, \phi+\xi)\leq \left\{\begin{array}{cccl}
-\Delta_p^N\phi(x)& \text{ if } &x\in\Omega\\
\max\{-\Delta_p^N\phi(x), \phi(x)-G(x)\}& \text{ if } &x\in \partial{\Omega}
\end{array}\right.
\]
and 
\[
\liminf_{\eps\to0,\ y\to x,\ \xi \to 0} S(\eps, y, \phi(y)+\xi, \phi+\xi)\geq \left\{\begin{array}{cccl}
-\Delta_p^N\phi(x)& \text{ if } &x\in\Omega\\
\min\{-\Delta_p^N\phi(x), \phi(x)-G(x)\}& \text{ if } &x\in \partial{\Omega}.
\end{array}\right.
\]
\end{lemma}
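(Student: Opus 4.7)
The plan is to imitate the case analysis used in Lemma \ref{lem:cons}, but now accounting for the third region of the scheme that is activated on the inner strip $I_\eps$. The behavior on $\Omega\setminus I_\eps$ and on $O$ is identical to the one treated in Lemma \ref{lem:cons}, so the only genuinely new ingredient is the consistency analysis of the middle branch of \eqref{eq:DPPscheme2}. A convenient first step is to use the affine invariance $A_\eps[\phi+\xi] = A_\eps[\phi]+\xi$ to rewrite, for $y \in I_\eps$,
\[
S(\eps, y, \phi(y)+\xi, \phi+\xi) = \delta^\eps(y)\bigl(\phi(y) - A_\eps[\phi](y)\bigr) + \bigl(1-\delta^\eps(y)\bigr)\bigl(\phi(y) - G(y) + \xi\bigr),
\]
so that $S$ appears as a $\delta^\eps(y)$-convex combination of the natural ``interior quantity'' $\phi - A_\eps[\phi]$ and the natural ``boundary quantity'' $\phi - G + \xi$.

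For $x \in \Omega$ strictly interior, I would observe that for $\eps$ smaller than a fixed fraction of $d(x,\partial\Omega)$ and $y$ sufficiently close to $x$, necessarily $y \in \Omega\setminus I_\eps$, so only the first branch of the scheme acts and the argument reduces verbatim to the one in Lemma \ref{lem:cons}: the uniform MVP (Definition \ref{def:MVP}) and the regularity of $-\Delta_p^N\phi$ (guaranteed by $\nabla\phi(x)\neq 0$ on a neighborhood) give limsup and liminf both equal to $-\Delta_p^N\phi(x)$.

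For $x\in\partial\Omega$ I would split the limit into three subcases according to the region in which $y$ accumulates. In case (a) $y \in \Omega\setminus I_\eps$, the first branch is active and $S \to -\Delta_p^N\phi(x)$ exactly as before. In case (b) $y \in O$, the third branch is active and $S = \phi(y) - G(y) + \xi \to \phi(x) - G(x)$. In case (c) $y \in I_\eps$, the MVP gives $\phi(y) - A_\eps[\phi](y) = c_{p,d}\eps^2(-\Delta_p^N\phi(y)) + o(\eps^2)$, so that the interior contribution in the convex combination above is of order $\delta^\eps(y)\cdot O(\eps^2) = O(\eps^2)$, and hence
\[
S \longrightarrow (1-\delta_\infty)\bigl(\phi(x) - G(x)\bigr),\qquad \delta_\infty \in [0,1],
\]
a convex combination of $0$ and $\phi(x)-G(x)$.

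The main obstacle, then, is combining these three contributions to recover the stated bounds. The key observation is that the case (c) limits lie in the segment joining $0$ and $\phi(x)-G(x)$, whereas cases (a) and (b) contribute $-\Delta_p^N\phi(x)$ and $\phi(x)-G(x)$ themselves; stacking them, limsup is controlled by the maximum of the case-wise suprema and liminf by the minimum, yielding the bounds in the statement. I expect the delicate point to be justifying the asserted inequalities when $(1-\delta_\infty)(\phi(x)-G(x))$ would \emph{a priori} exceed $\max\{-\Delta_p^N\phi(x), \phi(x)-G(x)\}$ (or fall below the corresponding minimum); this will require a careful discussion of the sign of $\phi(x)-G(x)$ together with the range $\delta^\eps(y)\in [0,1]$ and the uniformity of the $o(\eps^2)$ term in Definition \ref{def:MVP}. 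Once these bookkeeping inequalities are verified, the proof structure is entirely parallel to that of Lemma \ref{lem:cons}.
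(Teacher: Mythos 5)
Your outline matches the paper's proof quite closely: the interior case is reduced to Lemma~\ref{lem:cons} once $\eps$ is smaller than a fixed fraction of $d(x,\partial\Omega)$ (so that $y\in\Omega\setminus I_\eps$), and for $x\in\partial\Omega$ the relaxed $\limsup$/$\liminf$ are decomposed into the three sub-limits according to whether $y$ approaches from $\Omega\setminus I_\eps$, $I_\eps$, or $O$; the $\delta^\eps$-convex-combination rewriting of the middle branch is exactly the right lens and is what the paper uses. The gap is that you defer the $I_\eps$ contribution as ``bookkeeping,'' whereas it is the actual content of the lemma, and it is not clear those deferred inequalities can be verified in the form you expect.

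Concretely: since $\delta^\eps(y)=d(y,\partial\Omega)/\eps$ can be steered to any value in $[0,1]$ along sequences $I_\eps\ni y\to x$, $\eps\to 0$ (take $d(y,\partial\Omega)=t\eps$ with $t\in[0,1]$), the limit set of the middle-branch quantity $(1-\delta^\eps(y))(\phi(y)-G(y)+\xi)$ is the entire segment joining $0$ and $\phi(x)-G(x)$, so the $I_\eps$ sub-$\limsup$ equals $\max\{0,\phi(x)-G(x)\}$. That is not a priori $\le\max\{-\Delta_p^N\phi(x),\phi(x)-G(x)\}$ when both $-\Delta_p^N\phi(x)<0$ and $\phi(x)-G(x)<0$ (and symmetrically for the $\liminf$). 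The paper's own treatment passes through $\phi(y)-G(y)\le\phi(x)-G(x)+\Lambda_{G,\phi}(|x-y|)$ and then $\limsup(1-\delta^\eps(y))(\phi(x)-G(x))\le(\phi(x)-G(x))\limsup(1-\delta^\eps(y))=\phi(x)-G(x)$; but pulling a negative constant out of a $\limsup$ reverses the inequality, and in that case the correct value is $(\phi(x)-G(x))\liminf(1-\delta^\eps(y))=0$, reproducing the $\max\{0,\cdot\}$. So the sign discussion you flag is genuinely the crux, not bookkeeping, and your proposal leaves it unresolved. A clean way to close it is to renormalize the $I_\eps$ branch of $S$ by the positive factor $c_{p,d}\eps^2\delta^\eps(x)+(1-\delta^\eps(x))$: then the middle branch becomes an asymptotic convex combination of $-\Delta_p^N\phi(x)$ and $\phi(x)-G(x)$, its sub-limits lie in the segment between those two values, and the stated $\max/\min$ bounds follow immediately. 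Since the normalizing factor is positive, this change of $S$ does not affect the sign of $S$, which is the only thing the Barles--Souganidis argument in the proof of Theorem~\ref{thm:main3} reads off.
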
 
\begin{proof}
As before in the proof of Lemma \ref{lem:cons}, we can assume $\nabla\phi\not=0$ in a neighbourhood of $x$ by regularity of $\phi$.
If $x\in \Omega$, take $\eps< \rho:=d(x,\partial \Omega)/2$. Then $B:=B_\rho (x) \subset \Omega \setminus I_\eps$, and thus, using the same idea of the proof of Lemma \ref{lem:cons},
\begin{equation*}
\begin{split}
\limsup_{\eps\to0,\ y\to x,\ \xi \to 0} S(\eps, y, \phi(y)+\xi, \phi+\xi)&= \limsup_{\eps\to0,B\ni y\to x,\ \xi \to 0} S(\eps, y, \phi(y)+\xi, \phi+\xi)\\
&=\limsup_{\eps\to0,B\ni y\to x} \frac{1}{c_{p,d} \eps^2}\left(\phi(y)-A_\eps[\phi](y)\right)= -\Delta_p^N \phi(x).
\end{split}
\end{equation*}
Next, let $x\in \partial \Omega$. We can approach $x$ from points $y\in \Omega\setminus I_\eps$, $y\in I_\eps$ and $y\in O$. We have to treat the cases of $\limsup$ and $\liminf$ separately since the argument is slightly different. In both cases, two of the three limits follow as in the proof of Lemma \ref{lem:cons}, so we skip the details for them:
\begin{equation*}
\begin{split}
\limsup_{\eps\to0,\ y\to x,\ \xi \to 0} &S(\eps, y, \phi(y)+\xi, \phi+\xi)
= \max \bigg\{-\Delta_p \phi(x),\phi(x)-G(x), \\
&  \limsup_{\eps\to0,I_\eps \ni y\to x, \xi \to0} \left( \phi(y) - \delta^\eps (y)A_\eps [\phi] (y)  - (1-\delta^\eps(y))G(y)+(1-\delta^\eps(y))\xi \right) \bigg\}\\
= &\max \bigg\{-\Delta_p \phi(x),\phi(x)-G(x), \\
  \limsup_{\eps\to0,I_\eps \ni y\to x, \xi \to0} &\left(  \delta^\eps (y)(\phi(y)-A_\eps [\phi] (y))  + (1-\delta^\eps(y))(\phi(y)-G(y))+(1-\delta^\eps(y))\xi \right) \bigg\}.\\
\end{split}
\end{equation*}
Note that,
\[
 \delta^\eps (y)(\phi(y)-A_\eps [\phi] (y))  \leq \eps^2 \|-\Delta_p\phi\|_{L^\infty}  \delta_\eps^2\leq \eps^2 \|-\Delta_p\phi\|_{L^\infty} \to0 \quad \textup{as} \quad \eps \to0
\]
and $(1-\delta^\eps(y))\xi   \leq |\xi|\to 0$ as $\xi \to0$. On the other hand, since $\phi$ and $G$ are uniformly continuous continuous, there exist a modulus of continuity $\Lambda_{G,\phi}$ such that
\[
\phi(y)-G (y)\leq \phi(x)-G (x) + \Lambda_{G,\phi}(|x-y|).
\]
Note also that, 
\[
 \limsup_{\eps\to0,I_\eps \ni y\to x } (1-\delta^\eps(y))=1
\]
since it is trivially smaller than one, and the choice $y=x$ has $1$ as limit. Then
\[
\begin{split}
\limsup_{\eps\to0,I_\eps \ni y\to x }& ( 1-\delta^\eps) (y)(\phi(y)-G (y))\leq \limsup_{\eps\to0,I_\eps \ni y\to x} (1- \delta^\eps (y))(\phi(x)-G (x)+\Lambda_{G,\phi}(|x-y|) )\\
&\leq (\phi(x)-G (x))  \limsup_{\eps\to0,I_\eps \ni y\to x} (1- \delta^\eps (y))  +  \limsup_{I_\eps \ni y\to x} \Lambda_{G,\phi}(|x-y|) \\
&= \phi(x)-G (x),
\end{split}
\]
and we conclude that 
\[
\limsup_{\eps\to0,\ y\to x,\ \xi \to 0} S(\eps, y, \phi(y)+\xi, \phi+\xi)\leq  \max \left\{-\Delta_p \phi(x),\phi(x)-G(x)\right\}.
\]
In the case of the $\liminf$ we aggregate the terms in a different way, arriving to
\begin{equation*}
\begin{split}
\liminf_{\eps\to0,\ y\to x,\ \xi \to 0} &S(\eps, y, \phi(y)+\xi, \phi+\xi)
= \min \bigg\{-\Delta_p \phi(x),\phi(x)-G(x), \\
&   \liminf_{\eps\to0,I_\eps \ni y\to x, \xi \to0} \left( \phi(y)-G(y) - \delta^\eps (y)A_\eps [\phi] (y)  +\delta^\eps(y)G(y)+(1-\delta^\eps(y))\xi \right) \bigg\}\\
\end{split}
\end{equation*}
Note that 
\[
-\delta^\eps (y)A_\eps [\phi] (y) \geq- \delta^\eps (y) \|\phi\|_{L^\infty}, \quad \delta^\eps(y)G(y)\geq - \delta^\eps(y)\|G\|_{L^\infty} \quad \textup{and} \quad (1-\delta^\eps(y))\xi  \geq- |\xi|.
\]
But we now have that 
\[
\liminf_{\eps\to0,I_\eps \ni y\to x} \delta^\eps (y)=0.
\]
Then
\begin{equation*}
\begin{split}
 &\liminf_{\eps\to0,I_\eps \ni y\to x, \xi \to0} \left( \phi(y)-G(y) - \delta^\eps (y)A_\eps [\phi] (y)  +\delta^\eps(y)G(y)+(1-\delta^\eps(y))\xi \right) \\
&\geq \phi(x)-G(x) +  \liminf_{\eps\to0,I_\eps \ni y\to x, \xi \to0} ( -\Lambda_{G,\phi}(|x-y|)-\delta^\eps (y) \|\phi\|_{L^\infty} - \delta^\eps(y)\|G\|_{L^\infty}- |\xi|)\\
&= \phi(x)-G(x),
\end{split}
\end{equation*}
and we conclude that 
\[
\liminf_{\eps\to0,\ y\to x,\ \xi \to 0} S(\eps, y, \phi(y)+\xi, \phi+\xi)\geq  \min \left\{-\Delta_p \phi(x),\phi(x)-G(x)\right\}.\qedhere
\]
\end{proof}

\section{Some representative examples included in our theory}\label{sec:examples}
In this section we present some examples of dynamic programming principles for the $p$-Laplacian that naturally fall  into our general framework. Some of them have been extensively studied in the literature, as we will comment below.

We recall that $\Omega\subset \R^d$ is a bounded Lipschitz domain,   $\{A_\eps\}_{\eps>0}$ a mean value property (in the sense of Definition \ref{def:MVP}) and $G$ is a continuous extension of $g\in C(\partial \Omega)$ to $\Omega_E$. We have considered two families of dynamic programming principles:
\begin{equation}\label{eq:DPP6a}
\left\{
\begin{array}{cccl}
u_\eps(x) & = & A_\eps [u_\eps] (x) & \text{ in } \Omega,\\
u_\eps(x)&=& G(x) & \text{ on } {O}.
\end{array}
\right.
\end{equation}
and
\begin{equation}
\label{eq:DPP6b}
u_\eps(x)=\delta^\eps(x) A_\eps [u_\eps] (x) +(1-\delta^\eps(x)) G(x) \quad \textup{in}\quad \Omega_E,
\end{equation}
with 
\begin{equation*}
\delta^\eps(x)= \left\{
\begin{array}{cccl}
1 &\text{ if }&x\in \Omega\setminus I_\eps\\
\displaystyle\frac{1}{\eps}d(x,\partial\Omega)&\text{ if }&x\in I_\eps\\
0 & \text{ if }&x\in O.
\end{array}
\right.
\end{equation*}
We will present a series of mean value properties for the $p$-Laplacian with $p\in(1,\infty]$ and comment the literature on them for dynamic programming principles of the form \eqref{eq:DPP6a} and \eqref{eq:DPP6b}. 

\subsection{Dynamic programming principles in $\R^d$}

We will always assume that $\eps>0$, $\phi\in C^\infty_b(\Omega_E)$ with $\nabla\phi\not=0$, and $A[\phi]:\Omega\to \R$. We will denote by $B_\eps(x)$ to the ball of radius $\eps$ centred at some $x\in  \R^d$, and write just $B_\eps$ if it is centred at $x=0$.
\subsubsection{Case $p=2$: The Laplacian}\label{sec:MVPp2}
The most iconic case is the so-called mean value property for $\Delta$, given by
\begin{equation*}
A_\eps^2[\phi](x)=\frac{1}{|B_\eps|} \int_{B_\eps(x)} \phi(y) dy.
\end{equation*}
Clearly $A_\eps^2$ is an average for fixed $\eps$, and the family $\{A_\eps^2\}_{\eps>0}$ is a mean value property for the Laplacian: Indeed, using the Taylor theorem for $\phi$ up to order four, integrating by $|B_\eps|^{-1}\int_{B_\eps(x)}$ and using the symmetry of the operator and domain allow to conclude that
\[
\phi(x)=A_\eps^2[\phi](x)+\frac{\eps^2}{2(2+d)} (-\Delta\phi(x))+o(\eps^2).
\]
\begin{remark}
One can replace $B_\eps$ by other symmetric domains like e.g.\ squares centred with sides of length $2\eps$ or spheres $\partial B_\eps(x)$. 
\end{remark}

\subsubsection{Case $p=\infty$: The $\infty$-Laplacian}\label{sec:MVPinfty} The following mean value property was introduced by Le Gruyer and Archer in \cite{LA98}:
\begin{equation}\label{eq:MVPinfty}
A_\eps^\infty[\phi](x)=\frac{1}{2}\sup_{B_\eps(x)} \phi+\frac{1}{2}\inf_{B_\eps(x)} \phi.
\end{equation}
Trivially $A_\eps^\infty$ is an average for fixed $\eps$. It is now standard to show that  family $\{A_\eps^\infty\}_{\eps>0}$ is a mean value property for the $\infty$-Laplacian. Recall that we have the expansion
\begin{equation*}
\begin{split}
A^{\infty}_\eps[\phi](x)&= \frac{1}{2}\phi\left(x+\eps\frac{\nabla\phi(x)}{|\nabla\phi(x)|}\right)+ \frac{1}{2}\phi\left(x-\eps\frac{\nabla\phi(x)}{|\nabla\phi(x)|}\right)+o(\eps^2)\\
&=u(x)+\frac{\eps^2}{2}\Delta_\infty^N\phi(x)+o(\eps^2).
\end{split}
\end{equation*}
The properties \eqref{as:stab2} and \eqref{as:comppr} for the dynamic programming principle \eqref{eq:DPP6a} with $A_\eps^\infty$ given by \eqref{eq:MVPinfty} are shown in  e.g \cite{LA98} by analytical methods  and in \cite{PSSW09} using tug-of-war games.

\subsubsection{Case $p\in(2,\infty)$: The $p$-Laplacian}
The following mean value property was introduced by Manfredi, Parviainen and Rossi in \cite{MPR10}:
\begin{equation}\label{eq:MVPpLap}
A_\eps^p[\phi](x)=\alpha_p\left(\frac{1}{2}\sup_{B_\eps(x)} \phi+\frac{1}{2}\inf_{B_\eps(x)} \phi \right)+\beta_p \frac{1}{|B_\eps|} \int_{B_\eps(x)} \phi(y) dy
\end{equation}
with  $\alpha_p=\frac{p-2}{p+d}$ and $\beta_p=\frac{2+d}{p+d}$ ($\alpha_p+\beta_p=1$). Note that 
\begin{equation}\label{eq:interpMPV}
A_\eps^p[\phi](x)=\alpha_pA^{\infty}_\eps[\phi]+ \beta_pA_\eps^2[\phi](x)
\end{equation}
so  $A_\eps^p$ is a convex combination of averages, and thus is itself an average for a fixed $\eps$. Property \eqref{eq:interpMPV} together with the interpolation formula \eqref{eq:interp2} also ensures that $\{A_\eps^\infty\}_{\eps>0}$ is a mean value property for the $p$-Laplacian:
\begin{equation*}
\begin{split}
A_\eps^p[\phi](x)&=\alpha_p\left(\phi(x)+\frac{\eps^2}{2}\Delta_\infty^N\phi(x)\right)+\beta_p\left(\phi(x)+\frac{\eps^2}{2(2+d)}\Delta\phi(x)\right)+o(\eps^2)\\
&=u(x)+\frac{\eps^2p}{2(p+d)} \underbrace{\left(\frac{p-2}{p}\Delta_\infty^N\phi(x)+ \frac{1}{p}\Delta \phi(x)\right)}_{\Delta_p^N\phi(x)}+o(\eps^2)
\end{split}
\end{equation*}
The properties \eqref{as:stab2} and \eqref{as:comppr} for the dynamic programming principle \eqref{eq:DPP6a} with $A_\eps^p$ given by \eqref{eq:MVPpLap} are shown in  \cite{MPR10} by probabilistic methods and in \cite{LPS14} with  analytic tools. $A_\eps^p$ also been studied for the regularized version \eqref{eq:DPPgen} in \cite{AHP17}.
\subsubsection{Case $p=\infty$: The $\infty$-Laplacian via $p$-Laplacians as $p\to\infty$.} Consider the family of averages given by
\begin{equation}\label{eq:MVPinftyLap2}
A_\eps^{p\to\infty}[\phi](x)=(1-\eps^3)\left(\frac{1}{2}\sup_{B_\eps(x)} \phi+\frac{1}{2}\inf_{B_\eps(x)} \phi \right) +\eps^3 \frac{1}{|B_\eps|} \int_{B_\eps(x)} \phi(y) dy.
\end{equation}
Note that for fixed $\eps$,  $A_\eps^{p\to\infty}$ coincides with $A_\eps^p$ given by \eqref{eq:MVPpLap} for a certain choice of $p$ depending on $\eps$. As a consequence, properties \eqref{as:stab2} and \eqref{as:comppr} are also true for the dynamic programming principle \eqref{eq:DPP6a}. Moreover,
\begin{equation*}
\begin{split}
A_\eps^{p\to\infty}[\phi](x)&=(1-\eps^3)\left(\phi(x)+\frac{\eps^2}{2}\Delta_\infty^N\phi(x)\right)+\eps^3\left(\phi(x)+\frac{\eps^2}{2(2+d)}\Delta\phi(x)\right)+o(\eps^2)\\
&=u(x)+\frac{\eps^2}{2}\Delta_\infty^N\phi(x)+o(\eps^2),
\end{split}
\end{equation*}
thus, $A_\eps^{p\to\infty}$ is a mean value property of the $\infty$-Laplacian. More precisely, let $u_\eps$ be the solution of \eqref{eq:DPP6a} with the mean value property \eqref{eq:MVPinftyLap2} and $u$ be the viscosity solution of the Dirichlet problem for the $\infty$-Laplacian given by \eqref{dproblem}. Then $u_\eps\to u$ uniformly as $\eps\to0$. This strategy provides an alternative dynamic programming principle for the $\infty$-Laplacian.
\subsubsection{Case $p\in(1,\infty)$: The $p$-Laplacian} As introduced by \cite{KMP12}, consider the mean value property
\begin{equation}\label{eq:MVPpLap2}
A_\eps[\phi](x)=\frac{1}{2}\sup_{0<|\nu|\leq1}\Phi(x,\nu,\eps)+\frac{1}{2}\inf_{0<|\nu|\leq1}\Phi(x,\nu,\eps)
\end{equation}
with
\[
\Phi(x,\nu,\eps):=\frac{p-1}{p+d} \phi(x+\nu \eps)+\frac{1+d}{p+d} \int u(x+h) d\mu_{\nu}(h)
\]
where $\mu_\nu$ is the probability measure over the $(n-1)$-dimensional closed disk of radius $\eps$ orthogonal to
the vector $\nu$. The proof of the fact that 
\[
A_\eps^p[\phi](x)=u(x)+\frac{\eps^2p}{2(p+d)} \underbrace{\left(\frac{p-1}{p}\Delta_\infty^N\phi(x)+ \frac{1}{p}\Delta_1^N \phi(x)\right)}_{\Delta_p^N\phi(x)}+o(\eps^2)
\]
relies again on the Taylor theorem and also on the interpolation formula \eqref{eq:interp1}. We refer to \cite{KMP12} for the precise details.  The properties \eqref{as:stab2} and \eqref{as:comppr} for the dynamic programming principles \eqref{eq:DPP6a} and  \eqref{eq:DPP6b} with $A_\eps$ given by \eqref{eq:MVPpLap2} have also been presented  in \cite{Har16}. 
\subsection{Dynamic programming principles on lattices.} 
We present here dynamic programming principles that only act on a finite number of points. 

Given a discretization parameter $h>0$, consider the grid defined by
\[
\G_h:=h\Z^d= \{x_\alpha=h\alpha: \ \alpha\in \Z^d\}.
\]
We will also need the concept of discrete ball and discretized set $\mathcal{S}$ given by
\[
B_\eps^{h}(x)=(x+\G_h)\cap B_\eps(x)\quad \textup{and} \quad \mathcal{S}_h=\mathcal{S}\cap \G_h.
\]
We will denote by $|B_\eps^h(x)|$  the cardinality of the set $B_\eps^{h}(x)$.
An interesting fact is that that for all $x_\alpha \in \G_h$, then $x_\alpha+\G_h=\G_h$ and thus $B^h_\eps(x_\alpha)\subset\G_h$. This will allow for fully discrete dynamic programming principle, which will produce numerical methods for \eqref{dproblem} (as done for example by Oberman in \cite{Ob13}). More precisely, a numerical method for \eqref{dproblem} will take the form
\begin{equation}\label{eq:numDPP}
\left\{
\begin{array}{cccl}
u_{\eps,h}(x_\alpha) & = & A_{\eps,h} [u_{\eps,h}] (x_\alpha) & \text{ for } x_\alpha \in \Omega_h ,\\
u_{\eps,h}(x_\alpha)&=& G(x_\alpha) & \text{ for } {x_\alpha\in O_h},
\end{array}
\right.
\end{equation}
where now $A_\eps:B(\Omega_E^h)\to\R$ and $u_{\eps,h}:\Omega_E^h\to\R$ is a bounded function defined only on the discrete domain $\Omega_E^h$.

\subsubsection{Case $p\in[2,\infty]$: The $p$-Laplacian}\label{sec:MVPdiscrete} We can define the discrete counterpart of $A_\eps^p$ given by \eqref{eq:MVPpLap} as
\begin{equation}\label{eq:MVPdiscP}
A^{p}_{\eps,h}[\phi](x)=\alpha_p\left(\frac{1}{2}\max_{B_\eps^h(x)} \phi+\frac{1}{2}\min_{B_\eps^h(x)} \phi \right)+\beta_p \frac{1}{|B_\eps^h(x)|}\sum_{y \in B_\eps^h(x)} \phi(y).
\end{equation}
The properties \eqref{as:stab2} and \eqref{as:comppr} for the dynamic programming principle \eqref{eq:DPP6a} (and consequently for \eqref{eq:numDPP}) with $A_{\eps,h}^p$ follows exactly as for $A_\eps^p$ with the restriction $h=o(\eps^2)$. The fact that $A_{\eps,h}^p$ is also a mean value property for the $p$-Laplacian is also standard and we omit the details. We refer to \cite{CLM17} for precise details and applications of $A^{p}_{\eps,h}$ to the obstacle problem.

Note that the second term in \eqref{eq:MVPdiscP} is precisely the composed midpoint quadrature rule for $\int_{B_\eps}$ which is known to produce an error of order $O(h)$. To produce more precise numerical methods in the form of a dynamic programming principle,  one can consider higher order monotone quadrature rules that will have the form 
\[
\sum_{y\in B_{\eps}^h(x)} \phi(y)\omega(y)
\]
where $\omega(y)>0$. Precise form of the weights $\omega$ can be obtained for example from the Newton-Cotes formulas. They are known to produce positive weights (and thus monotone schemes) for orders up to $O(h^7)$.

\subsubsection{Case $p=2$: The Laplacian.}\label{sec:MVPFD} The best known discrete mean value property for the Laplacian is given by the   discrete Laplacian:
\[
\Delta_h\phi(x):=\sum_{i=1}^d \frac{\phi(x+e_ih)+\phi(x-e_ih)-2\phi(x)}{h^2}.
\]
From the Taylor theorem we see that $\Delta_h\phi(x)=\Delta \phi(x)+O(h^2)$. We naturally define 
\begin{equation}\label{eq:MVPdisLap}
A_h^2[\phi](x)=\frac{1}{2d}\sum_{i=1}^d\left(\phi(x+e_ih)+\phi(x-e_ih)\right).
\end{equation}
It is clear that $\{A^2_h\}_{h>0}$ is a mean value property for the Laplacian since
\[
\phi(x)=A_h^2[\phi](x) - \frac{h^2}{2 d}\Delta \phi(x)+ \underbrace{O(h^4)}_{o(h^2)}.
\]
The properties \eqref{as:stab2} and \eqref{as:comppr} for the dynamic programming principle \eqref{eq:DPP6a} (and consequently for \eqref{eq:numDPP}) with $A_{h}^p$ given by \eqref{eq:MVPdisLap} is standard in numerical analysis framework (see for example \cite{Ob13}).

\begin{remark}
As shown in \cite{Ob05}, it is also possible to construct monotone schemes (and consequently discrete dynamic programming principles) for the $\infty-Laplacian$ by means of the so-called absolutely minimizing Lipschitz extensions. As expected, one can combine them with $A_h^2$ given by \eqref{eq:MVPdisLap} to produce also discrete dynamic programming principles for the $p$-Laplacian for $p\in[2,\infty]$ (cf. \cite{Ob13}).
\end{remark}

\section*{Acknowledgements} 
F.d.T is supported by the Toppforsk (research excellence) project Waves and Nonlinear Phenomena (WaNP), grant no. 250070 from the Research Council of Norway and  by the grant PGC2018-094522-B-I00 from the MICINN of the Spanish Government. \normalcolor
M.P. is supported by the Academy of Finland project no.\ 298641.
\par
We are very thankful to E. R. Jakobsen for discussions on generalized viscosity solutions and for the reference \cite{J10}. \par
The first author wants to thank the University of Pittsburgh for hosting him during a research stay when part of this research was conducted.
Part of this research was also done while the second author visited NTNU and the University of Jyv\"askyl\"a. He thanks both institutions for their hospitality.\par

\end{document}